\let\cite=\citet
\pgfplotsset{compat=1.18}
\begin{document}

\newcommand\footnotemarkfromtitle[1]{%
	\renewcommand{\thefootnote}{\fnsymbol{footnote}}%
	\footnotemark[#1]%
	\renewcommand{\thefootnote}{\arabic{footnote}}}

\newcommand{\TheTitle}{A High Order IMEX Method for Generalized Korteweg de-Vries Equations}
\newcommand{\TheAuthors}{S. Gerberding}

\headers{IMEX Methods for Generalized Korteweg de-Vries Equations}{\TheAuthors}

\title{{\TheTitle}\thanks{Draft version, \today \funding{This material is based upon work supported in part by
   the National Science Foundation grant DMS2110868, the Air Force
   Office of Scientific Research, USAF, under grant/contract number
   FA9550-18-1-0397, the Army Research Office, under grant number
   W911NF-19-1-0431.}}}

\author{
	Seth Gerberding\footnotemark[1]
}

\maketitle

\renewcommand{\thefootnote}{\fnsymbol{footnote}}
\footnotetext[1]{Department of Mathematics, Texas A\&M University 3368
	TAMU, College Station, TX 77843, USA.}

\renewcommand{\thefootnote}{\arabic{footnote}}
\begin{abstract}
In this paper, we introduce a high order space-time approximation of generalized Korteweg de-Vries equations. More specifically, the method uses continuous \(H^1\)-conforming finite elements for the spatial approximation and implicit-explicit methods for the temporal approximation. The method is high order in both space, provably stable, and mass-conservative. The scheme is formulated, its properties are proven, and numerical simulations are provided to illustrate the proposed methodology.
\end{abstract}

\begin{keywords}
    implicit-explicit time integration methods, dispersive equations, \\ high-order method, mass conservative
\end{keywords}

\begin{AMS}
	65M60, 65M12, 35G31, 35G20, 37L65
\end{AMS}

\section{Introduction}\label{sec:introduction}
This paper is concerned with the numerical approximation of generalized Korteweg de-Vries (gKdV) equations using \(H^1\)-conforming finite elements and implicit-explicit (IMEX) time stepping. The generalization considered here is one that generalizes the usual hyperbolic flux $\tfrac12 u^2$ to a generic Lipschitz flux $\vecf(u)$. The objective of this work is to construct an approximation technique that is robust (conservative and stable), high order accurate in space and time, agnostic to the choice of hyperbolic flux, and stable up to a Courant–Friedrichs–Lewy (CFL) condition of the type \(\tau = \mathcal{O}(h)\).

\par The KdV equation is a classic partial differential equation in the field of non-linear dispersive waves. It has the special property that 
it admits \textit{soliton} solutions (also known as solitary waves), which are traveling waves that maintain their shape and speed. In general, solutions of the KdV equation can be analytically decomposed into solitons and dispersive waves using the inverse scattering method (\cite{Miura_76}). This phenomenon was first observed numerically by~\cite{Zabusky_65}.

 In applications, the KdV equation is a prototypical equation for shallow water models. It was first derived as a model to explain solitary waves first observed in \cite{Russell_1845}. Since then, it has also been used to study plasma physics (see \cite{Hong_Lee_99}), optics (\cite{Horsley_2016}), and other physical phenomenon (see \cite{Crighton_95}). Its generalizations also have wide applications in plasma physics, traffic flow, and water waves (\cite{Leblond_Mihalache_09, Li_06,Benjamine_Bona_Mahony_97}). A generalization in the context of shallow water models is the Serre-Green-Naghdi equation (\cite{Serre_53, Green_Laws_Naghdi_74}), which is a dispersive system that admits solitons and is used to model shoaling, wave propagation, and dam breaks, for example in \cite{Guermond_Kees_Popov_Tovar22_1}.  

\par Numerically, generalized KdV equations pose several challenges. Explicit time stepping methods require severe CFL restrictions, often of the kind \(\tau = \mathcal{O}(h^3)\), as encountered in \cite[Equation 2.22]{Sanz-Serna82}. This restriction requires a prohibitive number of time steps even when the final time is small; see \cite{Morris_Greig_76}. Implicit time stepping methods may circumvent the CFL restriction issue, for example in \cite{Bona_Dougalis_Karakashian_85, Sanz-Serna_Christie_81, Fu_Shu_2018}, but the nonlinear term leads to a system of nonlinear equations. Lastly, it is desirable for numerical methods to preserve discrete analogs of important properties admitted by the equation (\eg conservation of physical quantities). Particular attention has been paid to the mass and the energy (\(L^2\)-norm) of the solution; see \cite{Karakashain_Ohannes_Xing_Yulong_16, Sanz-Serna82, Winther_80}. 

\par In this paper, we propose a method which uses \(H^1\)-conforming finite elements and an IMEX time-stepping scheme. The method is conservative, high order accurate, and \(\ell^2\) stable at the discrete level. The IMEX split allows us to avoid the restrictive \(\tau = \mathcal{O}(h^3)\) CFL condition by treating the third order term implicitly. We also avoid nonlinear solves by treating the nonlinear flux explicitly, which only requires a CFL condition of the kind \(\tau = \mathcal{O}(h)\). We do not claim originality for the splitting approach; the reader is referred to \cite{Holden_Karlsen_Risebro_Tao_11} for an analysis on operator splitting for the KdV equation. To ensure robustness of the hyperbolic step (\cite{Holden_Lubich_Risebro_11}), we use a method developed by \cite{Guermond_Popov_17} to solve hyperbolic conservation laws. We also reduce the third order derivative to second order by introducing auxiliary variables; doing so permits us to use \(H^1\)-conforming finite elements.

\par The rest of this paper is organized as follows. The model problem is outlined in section \ref{sec:model_problem}. The details of the temporal and space discretizations are provided in section \ref{sec:approximation}. Section \ref{sec: high-order} describes how to make the method high order and conservative, and section \ref{sec:illustrations} contains simulations that demonstrate the theory. The main results of the paper are the \(\ell^2\) stability property (Theorem \ref{thm: imex-stability-theorem}) and the properties of the high-order scheme (Theorem \ref{thm: high-order}). 
\section{The model problem}\label{sec:model_problem}
In this section, we describe the model problem. Let \(d \in \N\) and let \(D = [a,b]^d\subset \R^d\) be the fundamental domain with length \(l_D := b-a\). Let \(\vecf \in \text{Lip}(\R, \R^d)\) be a Lipschitz, vector-valued function, hereafter referred to as the \textit{hyperbolic flux} or simply \textit{flux}. This choice is to indicate that we have in mind functions \(\vecf\) such that the equation
\begin{equation}
    \dt u + \Div(\vecf(u)) = 0, \label{eqn: scalar-conservation-law}
\end{equation}
\noindent is a hyperbolic conservation law. Let \(u_0(x) \in L^\infty(D)\), \(T>0\) and \(\epsilon>0\). We consider the following Cauchy problem for a generalized KdV equation: Find \(u(t,x):[0,T] \times D \to \R\) such that \(u(0,x) = u_0(x)\), \(u\) is periodic with respect to \(D\), and satisfies:

\begin{equation}
    \dt u + \Div(\vecf(u)) + \epsilon \lap(\dx u ) = 0 ~\text{ in \((0,T) \times D\)}. \label{eqn: scalar-dispersive-equation}
\end{equation}
We call \eqref{eqn: scalar-dispersive-equation} a ``generalized'' KdV equation because the hyperbolic flux, \(\vecf(u)\) is generic and does not have to be the usual Burgers flux. Generalizations in the past have included changing the polynomial degree of the flux, \ie \(u^p \dx u\), for example in \cite{Bona_Dougalis_Karakashian_McKinney_95, Karakashian_Ohannes_Makridakis_Charalambos_15}. Other authors have also assumed a generic flux, for example in \cite{Fu_Shu_2018}. 

\par Let us briefly mention several properties of \eqref{eqn: scalar-dispersive-equation} but omit their proofs for brevity. These properties are reflected in the numerical method; see theorem \ref{remark: imex-mass-conservation} for mass conservation and theorem \ref{thm: imex-stability-theorem} for a discrete \(l^2\) stability result. If \(u\) is a smooth solution to \eqref{eqn: scalar-dispersive-equation}, then we have \(\dt \int_D u(x,t) \dif x = 0\) (mass conservation), and if \(\int_D \Div(\vecf(u)) u \dif x =0\), then \( \dt \| u(t, \cdot) \|_{L^2(D)} = 0\) (energy conservation). For example, many generalized KdV equations consist of a polynomial flux, \ie \(f(u) = u^p\) for \(p \in \N\), and such fluxes satisfy these properties. 

\par Concrete examples of \eqref{eqn: scalar-dispersive-equation} include the following: 
\begin{align}
      \dt u + 6 u \dx u + \dxxx u & = 0, \label{eqn: kdv}\quad \text{(KdV)} \\ 
       \dt u + \dx u + \dxxx u & = 0, \quad \text{(Linear-KdV)} \\
        \dt u + 6 u \dx u + \lap( \dx u) & = 0,  \quad \text{(Zakharov–Kuznetsov (ZK) (\cite{Wazwaz_2008}))} \label{eqn: ZK}\\
        \dt u + \dx \left( \frac{1}{p}u^p\right) + \dxxx u & = 0,  \quad \text{(Polynomial gKdV)} \\
        \dt u + \dx u + \dx u^p + \dxxx u & = 0. 
\end{align}
\noindent We pose the model problem in multiple spatial dimensions, but in practice the problems we are interested in are one dimensional.

\section{Low-Order Approximation details}\label{sec:approximation}
In this section, we present the details for the low-order spatial and temporal approximation. It begins with the IMEX time-stepping method while remaining continuous in space. Then, it provides the spatial discretization details for the \(H^1\)-conforming finite elements. Next we provide the fully discrete explicit hyperbolic prediction and implicit dispersive update sub-steps. It ends with a stability result in the discrete \(l^2\)  norm defined in \eqref{eqn: l2-norm} and a mass-conservation result. 

\subsection{Temporal Discretization}
Let \(t^n\) be the current time with \(n \in\set{0:N}\), \(t^0 = 0\) and \(t^N = T\). Let \(\tau^n>0\) be the current time step with \(t^{n+1}:= t^n + \tau^n\). For the remainder of this work, we remove the dependency of \(\tau\) on \(n\) to alleviate the notation, but in practice the time step depends on \(n\). Let \(u^n: D \to \R\) be the approximation at time \(t^n\). We construct \(u^{n+1}\) using an implicit-explicit method, where the hyperbolic flux is treated explicitly and the third order operator is treated implicitly. Hence, \(u^{n+1}\) is found by solving the following equation: 
\begin{equation}
    \frac{u^{n+1}- u^n}{\tau} + \Div(\mathbf{f}(u^n)) + \epsilon \lap (\dx u^{n+1}) = 0. \label{eqn: semi-discrete-approx} 
\end{equation}
We introduce an intermediate stage, \(w^{n+1}\), and solve \eqref{eqn: semi-discrete-approx} by rewriting it as the following system: 
\begin{align}
        w^{n+1} & = u^n - \tau \Div( \mathbf{f}(u^n)), \label{eqn: semi-hyperbolic-prediction} \\
         u^{n+1} + &\tau \epsilon \lap( \dx u^{n+1}) = w^{n+1}.  \label{eqn: semi-disp-update}
\end{align}
Equation \eqref{eqn: semi-hyperbolic-prediction} is an explicit update with the hyperbolic flux, which then becomes the source term in the implicit update \eqref{eqn: semi-disp-update}. 

\subsection{Reformulation}
In this section, we introduce auxiliary variables to reformulate the model and reduce \eqref{eqn: semi-disp-update} to second order in space. Doing so allows us to use \(H^1\)-conforming finite elements, whereas keeping \eqref{eqn: semi-disp-update} would require \(H^2\)-conformity. Since this analysis concerns \eqref{eqn: semi-disp-update}, we consider the following equation: 

\begin{equation}
    \dt u + \epsilon \lap( \dx u ) = 0. \label{eqn: dispersive}
\end{equation}
\noindent The solution satisfies the energy estimate \( \dt \int_D u^2 \dif x =0\). Assuming \(d =2\), our reformulation of \eqref{eqn: dispersive} consists of auxiliary variables \(z,g\) which satisfy the following system: 
\begin{align}
    \dt u + \epsilon \dxx z & + \epsilon \dy \dx g = 0, \label{eqn: disp-reform-u}\\
    z & = \dx u, \label{eqn: disp-reform-z}\\
    g & = \dy u. \label{eqn: disp-reform-g}
\end{align}

\begin{lemma}
    Let \((u,z,g)\) be a smooth periodic solution to \eqref{eqn: disp-reform-u}-\eqref{eqn: disp-reform-g}. Then the following energy conservation equation holds: 
    \begin{equation}
        \dt \int_D u^2 \dif x = 0. \label{eqn: disp-reform-energy}
    \end{equation}
\end{lemma}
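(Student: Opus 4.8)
The plan is to run the classical $L^2$ energy estimate directly on the reformulated system \eqref{eqn: disp-reform-u}--\eqref{eqn: disp-reform-g}: multiply \eqref{eqn: disp-reform-u} by $2u$, integrate over $D$, eliminate the auxiliary variables with \eqref{eqn: disp-reform-z}--\eqref{eqn: disp-reform-g}, and then integrate by parts repeatedly, using periodicity of $D$ and of $(u,z,g)$ to discard every boundary term.

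First I would observe that, upon inserting \eqref{eqn: disp-reform-z}--\eqref{eqn: disp-reform-g}, one has $\dxx z + \dy\dx g = \dx(\dx z + \dy g) = \dx\lap u$, so \eqref{eqn: disp-reform-u} is nothing but \eqref{eqn: dispersive} rewritten in mixed form. In particular, multiplying \eqref{eqn: disp-reform-u} by $2u$ gives the pointwise identity $\dt(u^2) + 2\epsilon\, u\,\dx\lap u = 0$, and integrating over $D$ yields
\[
\dt \int_D u^2 \dif x = -2\epsilon \int_D u\,\dx\lap u \dif x ,
\]
so it remains only to show that $\int_D u\,\dx\lap u \dif x = 0$ for every smooth periodic $u$.

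For that, I would integrate by parts once in $x$ (no boundary term, by periodicity) and then apply Green's identity (again with vanishing boundary contribution):
\[
\int_D u\,\dx\lap u \dif x = -\int_D (\dx u)(\lap u)\dif x = \int_D \nabla(\dx u)\cdot\nabla u \dif x .
\]
Since $\nabla(\dx u)\cdot\nabla u = \dx\!\left(\tfrac12 |\nabla u|^2\right)$, the last integral equals $\tfrac12\int_D \dx\!\left(|\nabla u|^2\right)\dif x$, which vanishes by periodicity. Combining with the previous display gives \eqref{eqn: disp-reform-energy}.

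I expect no genuine obstacle here; the computation is routine. The only points deserving care are that periodicity of the domain and of the solution is precisely what makes all the boundary terms disappear, and that the smoothness assumption supplies enough regularity (class $C^3$ is more than enough) to legitimize the integrations by parts and the commutation of mixed partials. An alternative, slightly more pedestrian route is to substitute $z = \dx u$ and $g = \dy u$ before integrating and to treat the two resulting terms separately, each of which collapses to the integral over $D$ of a perfect $x$-derivative.
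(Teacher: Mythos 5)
Your proof is correct, but it takes a genuinely different route from the paper. You eliminate the auxiliary variables first, noting $\dxx z + \dy\dx g = \dx\lap u$, and then run the classical energy estimate on the scalar third-order equation, reducing everything to the vanishing of $\int_D \dx\bigl(\tfrac12|\nabla u|^2\bigr)\dif x$ over the periodic domain; all the integrations by parts you invoke are legitimate under the stated smoothness and periodicity. The paper instead never eliminates $z$ and $g$: it multiplies the first equation by $u$, the constraint $z=\dx u$ by $\epsilon\dx z$, and the constraint $g=\dy u$ by $\epsilon\dx g$, integrates, uses $\int_D z\,\dx z\dif x=\int_D g\,\dx g\dif x=0$, and sums so that the cross terms $\epsilon\int_D \dx u\,\dx z\dif x$ and $\epsilon\int_D \dy u\,\dx g\dif x$ cancel in pairs. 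The payoff of the paper's arrangement is structural rather than logical: it is exactly the testing strategy reused in the discrete stability proof (testing with $v_h=u_h^{n+1}$, $r_h=z_h^{n+1}$, $s_h=g_h^{n+1}$ in the implicit dispersive update), where $z_h\neq\dx u_h$ pointwise and your elimination step has no analogue. Your argument is more elementary and makes transparent that the mixed system is equivalent to the original equation at the continuous level, but it does not foreshadow the discrete mechanism the reformulation is designed to enable.
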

\begin{proof}
Multiply equation \eqref{eqn: disp-reform-u} by \(u\), \eqref{eqn: disp-reform-z} by \( \epsilon \dx z\), and \eqref{eqn: disp-reform-g} by \( \epsilon \dx g\). Integrate each over \(D\). Since \(g,z\) are periodic, we deduce \( \int_D z \dx z \dif x = \int_D g \dx g \dif x = 0\). We thus arrive at the following: 
\begin{align*}
        \dt \int_D \frac{u^2}{2} \dif x - \epsilon \int_D \dx z \dx u \dif x & - \epsilon \int_D \dx g \dy u \dif x = 0 \\
        \epsilon\int_D \dx u \dx z \dif x & = 0 \\
        \epsilon \int_D \dy u \dx g \dif x & = 0.
\end{align*}
\noindent Summing these equations and multiplying by \(2\) yields \eqref{eqn: disp-reform-energy}.
\end{proof}

\noindent Motivated by this analysis, we replace \eqref{eqn: dispersive} by the following system: 
\begin{align}
     \frac{u^{n+1} - w^{n+1}}{\tau} + \epsilon \dxx z^{n+1} & + \epsilon \dy \dx g^{n+1} = 0, \label{form: disp-mixed-form-1} \\
    z^{n+1} = & \dx u^{n+1}, \label{form: disp-mixed-form-2}\\
     g^{n+1} = & \dy u^{n+1}. \label{form: disp-mixed-form-3}  
\end{align}

\noindent We note that auxiliary variables have been used before for higher order derivatives. For example, \cite{Yan_Shu_02} reduced the third order problem to a system of first order equation for use in a local discontinuous Galerkin method. 

\subsection{Spatial Discretization}
In this section, we give the spatial discretization and finite element formulation. Recall \(D = [a,b]^d\). Let $\{\calK_h\}_{h>0}$ be a shape-regular sequence of meshes that cover \(D\) exactly. Now let $h>0$ be given, and let \((\Hat{K}, \Hat{P}, \Hat{\Sigma})\) be the reference element. For each $K \in \mathcal{K}_h$, let $T_K: \Hat{K} \to K$ be an affine geometric mapping. For $n \in \N$, let $\widehat{\mathcal{P}}_n$ be the set of all polynomials of degree $n$ on $\Hat{K}$. Furthermore, let $\mathcal{V}:= \set{1:I}$ be an enumeration of the degrees of freedom. Now we consider the space: 
\begin{equation}
    V_h := \set{ v \in C^0(D): v \circ T_K \in \widehat{P} \text{ for all $K \in K_h$ and is periodic with respect to \(D\)}}. \label{set: finite-element-space}
\end{equation}
\noindent Note that \(V_h \subset H^1(D)\). The reference shape functions are denoted by \(\set{\hat{\theta}_i}_{i \in \mathcal{N}}\) with \(\mathcal{N}:= \set{1:n_{sh}}\). These function forms a basis of \(\Hat{P}\) and we assume they satisfy the partition of unity property, i.e., \(\sum_{i \in \mathcal{N}} \hat{\theta}_i(\hat{x}) = 1\) for all \(\hat{x} \in \hat{K}\). We denote the global shape functions by \(\set{\phi_i}_{i \in \mathcal{V}}\). We assume that the global shape functions also have the partition of unity property. For \( i \in \mathcal{V}\), we let \(\mathcal{I}(i):= \set{j \in \mathcal{V}: ~ \phi_j \phi_i  \not\equiv 0 }\). The space approximation of \(u\) at time \(t^n\) is given by \(u_h^n := \sum_{i \in \mathcal{V}} U_i^n \phi_i \in V_h. \) Using the basis functions, we define several quantities: 
\begin{equation}
    m_i := \int_D \phi_i(x) \dif x, \quad \mathbf{c}_{ij}:= \int_D \phi_i(x) \grad \phi_j(x) \dif x, \quad \mathbf{n}_{ij} = \frac{\mathbf{c}_{ij}}{ \|\mathbf{c}_{ij} \|_{l^2}}.
\end{equation}
\noindent We assume that \(m_i >0\) for all \( i \in \mathcal{V}\). The partition of unity property implies that \(\sum_{j \in \mathcal{V}} \mathbf{c}_{ij} = 0\) for all \(i \in \mathcal{V}\), and \( j \in \mathcal{I}(i)\) iff \(i \in \mathcal{I}(j)\). We let \((\cdot, \cdot)\) denote the \(L^2\) inner product and also define the following weighted \(l^2\)-norm on \(V_h\), which we use to prove stability: 
\begin{equation}
    \| u_h \|_{l^2}^2 := \sum_{i \in \mathcal{V}} U_i^2 m_i. \label{eqn: l2-norm}
\end{equation}
\subsubsection{Explicit Hyperbolic Prediction}\label{sec: low-order-explicit} In this section, we outline the hyperbolic sub-step by fully discretizing \eqref{eqn: semi-hyperbolic-prediction} using a graph-viscosity technique developed in \citep{Guermond_Popov_17}. Given \(u_h^n \in V_h\), compute \(w_h^{n+1} \in V_h\) by:
\begin{equation}
    m_i \frac{W^{n+1}_i - U_i^n}{\tau} + \sum_{j \in \mathcal{I}(i)}\mathbf{f}(U_j^n)\cdot \mathbf{c}_{ij} - d_{ij}^n(U_j^n - U_i^n) = 0. \label{form: explicit-hyperbolic-prediction}
\end{equation}
\noindent Here, \(d_{ij}^n := \max( \lambda_{\max}(\mathbf{n}_{ij}, U_i^n, U_j^n) \| \mathbf{c}_{ij} \|_{l^2}, \lambda_{\max}(\mathbf{n}_{ji}, U_j^n, U_i^n) \| \mathbf{c}_{ji}\|_{l^2})\) is called graph viscosity, where \(\lambda_{\max}( \mathbf{n}_{ij}, U_i^n, U_j^n)\) is a bound on the max wave speed on the one-dimensional Riemann problem
\begin{equation}
    \dt u + \Div ( \mathbf{f}(u) \cdot \mathbf{n}_{ij}) = 0; \quad u_0(x) = \begin{cases}
        U_i^n & x < 0 \\ U_j^n & x>0
    \end{cases}; \quad \mathbf{n}_{ij}:= \mathbf{c}_{ij}/ \| \mathbf{c}_{ij} \|_{l^2}.
\end{equation}
\par Using the graph viscosity, \eqref{form: explicit-hyperbolic-prediction} has several key properties, which we recall in the following lemma. 

\begin{lemma}\label{lemma: maximum-principle}
    Let \( n \in \N\) and \(u_h^n \in V_h\). Assume \(\widehat{\theta}_i(x) \geq 0\) for all \( x \in \widehat{K}\) and \(i \in \mathcal{V}\). Assume the following CFL conditions holds: 
\begin{equation}
    \min_{i \in \mathcal{V}} \left( 1 - 2\tau\frac{\sum_{j \in \mathcal{I}(i)\setminus\{i\}}d_{ij}^n}{m_i} \right) \geq 0. \label{eqn: idp-cfl-condition}
\end{equation}

\noindent Then the following statements hold true. 
\begin{enumerate}
    \item Let \(U_i^{m,n}:= \min_{j \in \mathcal{I}(i)}\{U_j^n\}, ~ U_i^{M,n}:= \max_{j \in \mathcal{I}(i)} \{U_j^n\}\). Then, \\ \(W_i^{n+1} \in  [U_i^{m,n}, U_i^{M,n}]\).
    \item Let \(U^n_{\min} := \min_{j \in \mathcal{V}} \{U_j^n\}\) and \(U^n_{\max} := \max_{j \in \mathcal{V}} \{U_j^n\}\). Then \(w_h^{n+1}(x) \in\) \\ \( [U_{\min}^n, U_{\max}^n] ~ \forall x \in D.\) 
    \item Let \( \eta \in C^2(D)\) be convex. Define \(\vecq(u): \R \to \R^d\) satisfying \(q_l'(u) = \eta'(u) f'_l(u)\), \ie \((\eta, \vecq)\) form an entropy pair for \eqref{eqn: scalar-conservation-law}. Then the following discrete entropy inequality holds: 
    \begin{align}
         \frac{m_i}{\tau} \left( \eta(W_i^{n+1}) - \eta(U_i^n) \right) + \int_D \Div\left(\sum_{j \in \mathcal{I}(i)} \vecq(U_j^n) \phi_j
         \right) \phi_i \dif x \label{eqn: discrete-entropy}\\ - \sum_{j \in \mathcal{I}(i)} d_{ij}^n( \eta(U_j^n)- \eta(U_i^n)) \leq 0 \nonumber. 
    \end{align}
\end{enumerate}
\end{lemma}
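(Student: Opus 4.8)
The plan is to recast the explicit update \eqref{form: explicit-hyperbolic-prediction} as a convex combination of $U_i^n$ and of the Guermond--Popov \emph{bar states}, and then to read off all three statements from the known properties of those bar states. Using the partition-of-unity identity $\sum_{j\in\mathcal I(i)}\mathbf c_{ij}=0$, we have $\sum_{j}\vecf(U_i^n)\cdot\mathbf c_{ij}=0$, so after discarding the vanishing $j=i$ contributions \eqref{form: explicit-hyperbolic-prediction} reads $W_i^{n+1}=U_i^n+\frac{\tau}{m_i}\sum_{j\in\mathcal I(i)\setminus\{i\}}\bigl[d_{ij}^n(U_j^n-U_i^n)-(\vecf(U_j^n)-\vecf(U_i^n))\cdot\mathbf c_{ij}\bigr]$. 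Introducing, for $j\neq i$ with $d_{ij}^n>0$, the bar state
\[
  \overline U_{ij}^n := \frac{U_i^n+U_j^n}{2} - \frac{(\vecf(U_j^n)-\vecf(U_i^n))\cdot\mathbf c_{ij}}{2\,d_{ij}^n},
\]
each bracket equals $2d_{ij}^n(\overline U_{ij}^n-U_i^n)$, whence
\[
  W_i^{n+1} = \Bigl(1-\frac{2\tau}{m_i}\sum_{j\in\mathcal I(i)\setminus\{i\}}d_{ij}^n\Bigr)U_i^n
  \;+\; \sum_{j\in\mathcal I(i)\setminus\{i\}}\frac{2\tau d_{ij}^n}{m_i}\,\overline U_{ij}^n .
\]
The coefficients are nonnegative — the first by the CFL hypothesis \eqref{eqn: idp-cfl-condition}, the remaining ones since $d_{ij}^n\ge 0$ — and they sum to $1$, so this is a convex combination. (When $d_{ij}^n=0$ the bound $\lambda_{\max}=0$ forces $\vecf$ to be affine between $U_i^n$ and $U_j^n$, the corresponding bracket vanishes identically, and that index is simply omitted everywhere below.)

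Next I would invoke the scalar invariant-domain result of \cite{Guermond_Popov_17}: because $d_{ij}^n$ dominates $\lambda_{\max}(\mathbf n_{ij},U_i^n,U_j^n)\|\mathbf c_{ij}\|_{l^2}$, the bar state $\overline U_{ij}^n$ is a space average of the entropy solution of the one-dimensional Riemann problem with flux $\vecf(\cdot)\cdot\mathbf n_{ij}$ and data $(U_i^n,U_j^n)$; for a scalar conservation law the interval $[\min(U_i^n,U_j^n),\max(U_i^n,U_j^n)]$ is invariant, hence $\overline U_{ij}^n\in[U_i^{m,n},U_i^{M,n}]$. Since $U_i^n$ lies in this convex interval as well, Statement~1 follows at once from the convex-combination representation. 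For Statement~2, the assumption $\widehat\theta_i\ge 0$ yields $\phi_i\ge 0$, which together with $\sum_{i}\phi_i\equiv 1$ makes $w_h^{n+1}(x)=\sum_i W_i^{n+1}\phi_i(x)$ a convex combination of the nodal values $W_i^{n+1}\in[U_{\min}^n,U_{\max}^n]$ (by Statement~1); therefore $w_h^{n+1}(x)\in[U_{\min}^n,U_{\max}^n]$ for every $x\in D$.

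For the discrete entropy inequality I would apply Jensen to the convex combination, obtaining $\eta(W_i^{n+1})\le\bigl(1-\tfrac{2\tau}{m_i}\sum_{j\neq i}d_{ij}^n\bigr)\eta(U_i^n)+\sum_{j\neq i}\tfrac{2\tau d_{ij}^n}{m_i}\eta(\overline U_{ij}^n)$, and then use the companion bar-state entropy estimate from \cite{Guermond_Popov_17}, namely $d_{ij}^n\eta(\overline U_{ij}^n)\le d_{ij}^n\tfrac{\eta(U_i^n)+\eta(U_j^n)}{2}-\tfrac12(\vecq(U_j^n)-\vecq(U_i^n))\cdot\mathbf c_{ij}$, which is the entropy-pair analogue of the identity defining $\overline U_{ij}^n$ and comes from averaging the entropy inequality satisfied by the Riemann solution. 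Substituting and collapsing the $\eta(U_i^n)$ terms (the coefficients summing to $1$) leaves $\eta(W_i^{n+1})-\eta(U_i^n)\le\tfrac{\tau}{m_i}\sum_{j\neq i}d_{ij}^n(\eta(U_j^n)-\eta(U_i^n))-\tfrac{\tau}{m_i}\sum_{j\in\mathcal I(i)}(\vecq(U_j^n)-\vecq(U_i^n))\cdot\mathbf c_{ij}$, where the last sum was extended to $j=i$ at no cost. Finally $\sum_j\vecq(U_i^n)\cdot\mathbf c_{ij}=0$ and $\int_D\Div\bigl(\sum_j\vecq(U_j^n)\phi_j\bigr)\phi_i\dif x=\sum_j\vecq(U_j^n)\cdot\mathbf c_{ij}$; multiplying by $m_i/\tau$ and rearranging gives exactly \eqref{eqn: discrete-entropy}.

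The crux is not this bookkeeping but the two inputs borrowed from \cite{Guermond_Popov_17}: that $\overline U_{ij}^n$ is genuinely an average of the Riemann solution, so that it inherits both the invariant-domain bound and the averaged entropy inequality. Re-deriving them would require the explicit wave structure of the scalar Riemann problem together with a space-time control-volume integration; here they are used as black boxes. The only other point to watch is the directional bookkeeping — $\lambda_{\max}$ being measured along $\mathbf n_{ij}$ while $\mathbf c_{ij}=\|\mathbf c_{ij}\|_{l^2}\mathbf n_{ij}$, and the symmetrizing maximum in the definition of $d_{ij}^n$ — which is precisely what guarantees the bar-state estimates apply with this particular $d_{ij}^n$.
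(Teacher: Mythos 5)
Your proposal is correct and takes essentially the same route the paper relies on: the paper's ``proof'' is only the citation to \cite{Guermond_Ern_FE_III}, and your bar-state convex-combination decomposition under the CFL condition \eqref{eqn: idp-cfl-condition}, the Jensen step, and the averaged Riemann entropy estimate are precisely the argument of \cite{Guermond_Popov_17} that this citation points to, with the same two bar-state facts used as black boxes. The only loose end is the degenerate case $d_{ij}^n=0$ (your ``affine flux'' justification is slightly off, and for the entropy inequality one also needs $(\vecq(U_j^n)-\vecq(U_i^n))\cdot\mathbf{c}_{ij}\leq 0$ for such pairs, which follows from the stationary-shock entropy condition), but this is a harmless edge case rather than a gap.
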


\begin{proof}
    See \cite{Guermond_Ern_FE_III}.
\end{proof}

\noindent Now we prove an \(\ell^2\) stability result for the hyperbolic prediction. 

\begin{corollary}[Hyperbolic stability]\label{cor: l2-stability}
    Let \(\eta(u) = \frac{u^2}{2}\). Assume the conditions of lemma \ref{lemma: maximum-principle} are met. Then, the following \(\ell^2\) bound holds: 
    \begin{equation}
    \| w_h^{n+1} \|_{\ell^2} \leq  \| u_h^n \|_{\ell^2}. \label{eqn: hyperbolic-l2-stability}
\end{equation}

\end{corollary}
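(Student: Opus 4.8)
The plan is to obtain \eqref{eqn: hyperbolic-l2-stability} by summing the discrete entropy inequality \eqref{eqn: discrete-entropy} from part~3 of Lemma~\ref{lemma: maximum-principle} over all degrees of freedom and exploiting two algebraic cancellations. First I would take the convex entropy $\eta(u) = \tfrac{u^2}{2}$, which is $C^2$, together with an associated entropy flux $\vecq$ satisfying $q_l'(u) = \eta'(u) f_l'(u) = u\, f_l'(u)$; such a $\vecq$ exists (up to an additive constant, which will be immaterial) since $\vecf \in \mathrm{Lip}(\R,\R^d)$. Because the CFL condition \eqref{eqn: idp-cfl-condition} is assumed, Lemma~\ref{lemma: maximum-principle} gives, for each $i \in \mathcal{V}$,
\[
\frac{m_i}{\tau}\bigl(\eta(W_i^{n+1}) - \eta(U_i^n)\bigr) + \int_D \Div\Bigl(\sum_{j \in \mathcal{I}(i)} \vecq(U_j^n)\,\phi_j\Bigr)\phi_i \dif x - \sum_{j \in \mathcal{I}(i)} d_{ij}^n\bigl(\eta(U_j^n) - \eta(U_i^n)\bigr) \le 0 .
\]
I would then sum this over $i \in \mathcal{V}$.

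The two middle terms vanish. For the flux term, the partition-of-unity property $\sum_{i \in \mathcal{V}} \phi_i \equiv 1$ collapses the sum to $\int_D \Div\bigl(\sum_{j} \vecq(U_j^n)\phi_j\bigr)\dif x$, which is zero: integrating by parts against the constant test function $1$ produces only a gradient of a constant, and the boundary contribution vanishes by periodicity of $D$. For the graph-viscosity term, I would use that $d_{ij}^n = d_{ji}^n$ by construction and that $j \in \mathcal{I}(i)$ iff $i \in \mathcal{I}(j)$; the diagonal term $j = i$ contributes $0$, and relabeling $i \leftrightarrow j$ in the off-diagonal part shows $\sum_{i}\sum_{j \in \mathcal{I}(i)} d_{ij}^n(\eta(U_j^n) - \eta(U_i^n)) = 0$ since an antisymmetric quantity is being weighted by a symmetric one.

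What survives is $\tfrac{1}{\tau}\sum_{i \in \mathcal{V}} m_i\bigl(\eta(W_i^{n+1}) - \eta(U_i^n)\bigr) \le 0$, i.e. $\sum_i m_i (W_i^{n+1})^2 \le \sum_i m_i (U_i^n)^2$ after inserting $\eta(u) = \tfrac{u^2}{2}$ and multiplying by $2\tau$; recalling the definition \eqref{eqn: l2-norm} of the weighted norm and taking square roots yields \eqref{eqn: hyperbolic-l2-stability}. I do not expect a genuine obstacle, as the pointwise entropy inequality under the CFL condition is already supplied by Lemma~\ref{lemma: maximum-principle}; the only point deserving minor care is the vanishing of the discrete flux term, which rests precisely on the assumed partition-of-unity property of the global shape functions and on the periodicity of $D$.
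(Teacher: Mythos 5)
Your proposal is correct and follows essentially the same argument as the paper: sum the discrete entropy inequality \eqref{eqn: discrete-entropy} over \(i \in \mathcal{V}\), note that the flux term vanishes by the partition of unity and periodicity while the graph-viscosity term cancels by the symmetry of \(d_{ij}^n\) and antisymmetry of \(\eta(U_j^n)-\eta(U_i^n)\), and then insert \(\eta(u)=\tfrac{u^2}{2}\) together with the definition \eqref{eqn: l2-norm}. The only difference is that you spell out the cancellations and the construction of \(\vecq\) in more detail than the paper does.
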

\begin{proof}
    Sum \eqref{eqn: discrete-entropy} over \(i \in \mathcal{V}\). The partition of unity property and periodic boundary conditions imply the second term vanishes; the third term is antisymmetric, and so vanishes under the sum. By invoking the definition of \(\eta\) the proof is complete. 
\end{proof}

\begin{lemma}[Hyperbolic mass conservation]
\label{lemma: hyperbolic-mass-conservation}
Let \(u_h^n \in V_h\) and let \(w_h^{n+1}\) be computed via \eqref{form: explicit-hyperbolic-prediction}. Then, \(\int_D w_h^{n+1} \dif x = \int_D u_h^n \dif x\). 
\end{lemma}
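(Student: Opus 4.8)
The plan is to sum the fully discrete update \eqref{form: explicit-hyperbolic-prediction} over all degrees of freedom $i \in \mathcal{V}$ and identify each resulting term. First I would record the elementary consequence of the partition of unity property $\sum_{i \in \mathcal{V}} \phi_i \equiv 1$: namely $\int_D u_h^n \dif x = \sum_{i \in \mathcal{V}} U_i^n m_i$ and $\int_D w_h^{n+1} \dif x = \sum_{i \in \mathcal{V}} W_i^{n+1} m_i$, so that it suffices to prove $\sum_{i \in \mathcal{V}} m_i (W_i^{n+1} - U_i^n) = 0$.

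Next, multiplying \eqref{form: explicit-hyperbolic-prediction} by $\tau$ and summing over $i \in \mathcal{V}$ gives
\[
\sum_{i \in \mathcal{V}} m_i (W_i^{n+1} - U_i^n) + \tau \sum_{i \in \mathcal{V}} \sum_{j \in \mathcal{I}(i)} \vecf(U_j^n)\cdot \mathbf{c}_{ij} - \tau \sum_{i \in \mathcal{V}} \sum_{j \in \mathcal{I}(i)} d_{ij}^n (U_j^n - U_i^n) = 0,
\]
so the task reduces to showing the two double sums vanish. For the flux term, I would swap the order of summation (using that $j \in \mathcal{I}(i)$ iff $i \in \mathcal{I}(j)$) to rewrite it as $\sum_{j \in \mathcal{V}} \vecf(U_j^n) \cdot \big(\sum_{i \in \mathcal{I}(j)} \mathbf{c}_{ij}\big)$, and then observe that $\sum_{i \in \mathcal{V}} \mathbf{c}_{ij} = \int_D \big(\sum_{i} \phi_i\big)\grad\phi_j \dif x = \int_D \grad \phi_j \dif x = 0$ by periodicity of $\phi_j$ over $D$; equivalently, this column-sum identity follows from the skew-symmetry $\mathbf{c}_{ij} = -\mathbf{c}_{ji}$ (itself a consequence of $\int_D \grad(\phi_i\phi_j)\dif x = 0$) combined with the row-sum identity $\sum_{j}\mathbf{c}_{ij}=0$ already stated in the text. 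For the graph-viscosity term, $d_{ij}^n$ is symmetric in $i,j$ by its definition while $U_j^n - U_i^n$ is antisymmetric, so relabeling $i \leftrightarrow j$ in the double sum shows it equals its own negative, hence is zero.

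Combining these observations, $\sum_{i \in \mathcal{V}} m_i(W_i^{n+1} - U_i^n) = 0$, which by the first step is exactly $\int_D w_h^{n+1}\dif x = \int_D u_h^n \dif x$. I do not anticipate a genuine obstacle: the only point requiring a moment's care is the column-sum identity $\sum_i \mathbf{c}_{ij} = 0$, which is immediate from periodicity (or from skew-symmetry plus the stated row-sum property), and the rest is bookkeeping with the partition of unity and the symmetry of $d_{ij}^n$.
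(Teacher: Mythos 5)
Your proof is correct and is essentially the argument the paper relies on: the paper's ``proof'' is only a pointer to Remark 81.4 of Guermond--Ern, and that remark's reasoning is exactly your computation (sum \eqref{form: explicit-hyperbolic-prediction} over $i\in\mathcal{V}$, kill the flux term via the column-sum identity $\sum_{i}\mathbf{c}_{ij}=\int_D \grad\phi_j \dif x=0$ coming from the partition of unity and periodicity, and kill the viscosity term via the symmetry of $d_{ij}^n$ against the antisymmetry of $U_j^n-U_i^n$). One cosmetic remark: the identities $\int_D u_h^n \dif x=\sum_{i\in\mathcal{V}}U_i^n m_i$ and $\int_D w_h^{n+1}\dif x=\sum_{i\in\mathcal{V}}W_i^{n+1} m_i$ follow from linearity of the integral alone rather than from the partition of unity, but this misattribution has no bearing on the validity of the argument.
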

\begin{proof}
    We refer the reader to remark 81.4 in \citep{Guermond_Ern_FE_III}.
\end{proof}

\subsubsection{Implicit Dispersive Update}\label{sec: low-order-disp-update}

In this section, we outline the spatial discretization for the dispersive sub-step \eqref{form: disp-mixed-form-1}-\eqref{form: disp-mixed-form-3}. Let \(w_h^{n+1} \in V_h\). The fully discrete dispersive update is given by: Find 
\((u_h^{n+1}, z_h^{n+1}, g_h^{n+1}) \in V_h \times V_h \times V_h\) so that, for all \((v_h, r_h, s_h) \in V_h \times V_h \times V_h\): 
\begin{align}
      \left(\frac{u_h^{n+1} - w_h^{n+1}}{\tau},v_h \right) - \epsilon( \dx z_h^{n+1}, \dx v_h) - & \epsilon (\dx g_h^{n+1}, \dy v_h) - \epsilon c(z_h^{n+1} - \dx u_h^{n+1}, \dx v_h) -  \nonumber \\ \epsilon c & (g_h^{n+1} - \dy u_h^{n+1}, \dy v_h) = 0, \label{form: implicit-discrete-1} \\
       ( z_h^{n+1}- \dx u_h^{n+1}, & c r_h - \dx r_h) = 0, \label{form: implicit-discrete-2}  \\
             (g_h^{n+1} - \dy u_h^{n+1}, & c s_h - \dx s_h) =0. \label{form: implicit-discrete-3}
\end{align}
\noindent where \(c>0\) is a dimensional constant that scales by the inverse of the length scale.  

\begin{lemma}[Dispersive stability]
\label{lemma: implicit-stability}
    Let \(w_h^{n+1} \in V_h\) and let \(u_h^{n+1}\in V_h\) be computed by solving \eqref{form: implicit-discrete-1}-\eqref{form: implicit-discrete-3}. Then, for all \(\tau >0\), the following holds:
    \begin{multline}
         \| u_h^{n+1} \|_\Ltwo^2 + \| u_h^{n+1} - w_h^{n+1} \|_\Ltwo^2 + 2\tau \epsilon c \| z_h^{n+1} - \dx u_h^{n+1} \|_\Ltwo^2 + \\ 2\tau \epsilon c \| g_h^{n+1} - \dy u_h^{n+1} \|_\Ltwo^2 = \| w_h^{n+1} \|_\Ltwo^2. \label{eqn: implicit-stability}
    \end{multline}
\end{lemma}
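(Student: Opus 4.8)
The plan is to reproduce the cancellation structure behind the continuous energy identity \eqref{eqn: disp-reform-energy} at the fully discrete level, by testing \eqref{form: implicit-discrete-1} with the solution itself and \eqref{form: implicit-discrete-2}--\eqref{form: implicit-discrete-3} with the auxiliary variables, and then adding the resulting identities. Throughout, write $u := u_h^{n+1}$, $z := z_h^{n+1}$, $g := g_h^{n+1}$, $w := w_h^{n+1}$, and set $\zeta := z - \dx u$, $\gamma := g - \dy u$; these are the quantities whose $\Ltwo$-norms appear in the last two terms on the left of \eqref{eqn: implicit-stability}.

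First I would take $v_h = u$ in \eqref{form: implicit-discrete-1}. The discrete time-derivative term is handled by the polarization identity $(a - b, a) = \tfrac12\big(\|a\|_\Ltwo^2 + \|a - b\|_\Ltwo^2 - \|b\|_\Ltwo^2\big)$ with $a = u$, $b = w$; multiplying through by $2\tau$ then yields
\begin{equation*}
  \|u\|_\Ltwo^2 + \|u - w\|_\Ltwo^2 - \|w\|_\Ltwo^2 = 2\tau\epsilon\big[(\dx z, \dx u) + (\dx g, \dy u) + c(\zeta, \dx u) + c(\gamma, \dy u)\big].
\end{equation*}
Comparing with \eqref{eqn: implicit-stability}, it then suffices to show that the bracketed quantity equals $-c\|\zeta\|_\Ltwo^2 - c\|\gamma\|_\Ltwo^2$.

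The two needed identities come from the auxiliary equations. Taking $r_h = z$ in \eqref{form: implicit-discrete-2} gives $c(\zeta, z) = (\zeta, \dx z)$; since $z \in V_h \subset H^1(D)$ is periodic, $(z, \dx z) = \tfrac12\int_D \dx(z^2)\dif x = 0$, and expanding $\zeta = z - \dx u$ on both sides — the two $c\|\dx u\|_\Ltwo^2$ contributions cancel — leaves
\begin{equation*}
  (\dx z, \dx u) + c(\zeta, \dx u) + c\|\zeta\|_\Ltwo^2 = 0 .
\end{equation*}
The identical computation for \eqref{form: implicit-discrete-3} with $s_h = g$, using $(g, \dx g) = 0$, gives $(\dx g, \dy u) + c(\gamma, \dy u) + c\|\gamma\|_\Ltwo^2 = 0$. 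Substituting both relations into the bracket above collapses it to $-c\|\zeta\|_\Ltwo^2 - c\|\gamma\|_\Ltwo^2$, and rearranging the resulting equation is exactly \eqref{eqn: implicit-stability}.

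The polarization identity and the periodicity cancellations are routine; the step I would take care over is verifying that the choice $r_h = z$ in \eqref{form: implicit-discrete-2} reproduces \emph{precisely} the combination $(\dx z, \dx u) + c(\zeta, \dx u) + c\|\zeta\|_\Ltwo^2$ that appears after the $v_h = u$ test. This is the algebraic reason the stabilization terms in \eqref{form: implicit-discrete-1}--\eqref{form: implicit-discrete-3} are matched to the \emph{same} constant $c$: it converts the consistency defects $\zeta = z - \dx u$ and $\gamma = g - \dy u$ (nonzero because $V_h$ is not stable under differentiation) into the dissipative terms on the left of \eqref{eqn: implicit-stability}, rather than into stray terms that cannot be absorbed. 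Finally, $\tau$ enters only as an overall factor, so the identity holds for every $\tau > 0$ (as expected, the dispersive substep being implicit); in particular, setting $w = 0$ forces $u = 0$, which gives well-posedness of the linear system \eqref{form: implicit-discrete-1}--\eqref{form: implicit-discrete-3}.
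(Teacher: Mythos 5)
Your proposal is correct and follows essentially the same route as the paper: test \eqref{form: implicit-discrete-1} with $u_h^{n+1}$, \eqref{form: implicit-discrete-2} with $z_h^{n+1}$, and \eqref{form: implicit-discrete-3} with $g_h^{n+1}$, use the polarization identity for the time-increment term and periodicity to eliminate $(z_h^{n+1},\dx z_h^{n+1})$ and $(g_h^{n+1},\dx g_h^{n+1})$, then combine the three identities (the paper scales the auxiliary ones by $\tau\epsilon$ before adding, which is only a cosmetic difference from your rearrangement). Your closing observation that the identity yields injectivity, hence well-posedness, of the linear system matches Remark \ref{remark: discrete-disp-well-posed}.
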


\begin{proof}

We test \eqref{form: implicit-discrete-1} with \(v_h = u_h^{n+1}\), \eqref{form: implicit-discrete-2} with \(r_h = z_h^{n+1}\), and \eqref{form: implicit-discrete-3} with \(s_h = g_h^{n+1}\), which yields: 
\begin{align}
     \frac{\| u_h^{n+1}\|_{L^2}^2}{2} + \frac{\| u_h^{n+1} - w_h^{n+1} \|_{L^2}}{2} & - \frac{\| w_h^{n+1}\|_{L^2}^2}{2} - \tau \epsilon ( \dx z_h^{n+1}, \dx u_h^{n+1} ) \label{implicit-stability-1} \\
     - \tau \epsilon (\dx g_h^{n+1}, \dy u_h^{n+1}) - & \tau \epsilon c (z_h^{n+1} - \dx u_h^{n+1}, \dx u_h^{n+1} ) \nonumber\\  
     - & \tau \epsilon c( g_h^{n+1} - \dy u_h^{n+1}, \dy u_h^{n+1} ) = 0, \label{implicit-stability-eq-2} \nonumber \\
              c (z_h^{n+1}  - \dx u_h^{n+1},  z_h^{n+1}) -& (- \dx u_h^{n+1}, \dx z_h^{n+1}) = 0, \\
          c ( w_h^{n+1} - \dy u_h^{n+1},  w_h^{n+1}) &- ( - \dy u_h^{n+1}, \dx w_h^{n+1}) = 0 \label{implicit-stability-eq-3}.   
\end{align}

\noindent Multiplying equation \eqref{implicit-stability-eq-2} and \eqref{implicit-stability-eq-3} equations by \(\tau\epsilon\) and adding them to \eqref{implicit-stability-1} then gives 
\begin{align}
      \frac{\| u_h^{n+1}\|_{L^2}^2}{2} + \frac{\| u_h^{n+1} - w_h^{n+1} \|_{L^2}}{2} - \frac{\| w_h^{n+1}\|_{L^2}^2}{2} + \nonumber \\ \tau \epsilon c \| z_h^{n+1} - \dx u_h^{n+1} \|_{L^2}^2 + \tau c \epsilon \| g_h^{n+1} - \dy u_h^{n+1} \|_{L^2}^2 =0.
\end{align}
\end{proof}

\begin{remark}[Well-posedness of the dispersive update]\label{remark: discrete-disp-well-posed}
We conclude the dispersive update problem \eqref{form: implicit-discrete-1}-\eqref{form: implicit-discrete-3} is well posed by using the stability result and invoking the Rank-Nullity theorem. 
\end{remark}

\begin{remark}[Lumped mass for implicit dispersive update]
\label{remark: implicit-lumped-stability}
In order to prove a stability result for the IMEX method, we need to approximate the time derivative in \eqref{form: disp-mixed-form-1} using the lumped mass. The problem becomes: for all \( i \in \mathcal{V}\), find \(U_i^{n+1}\) so that: 
\begin{align}
      m_i \frac{U_i^{n+1} - W_i^{n+1}}{\tau} - \epsilon( \dx z_h^{n+1}, \dx \phi_i) - \epsilon(\dx g_h^{n+1}, \dy \phi_i) - \label{form: implicit-lumped-mass} \\ \epsilon c(z_h^{n+1} - \dx u_h^{n+1}, \dx \phi_i) - \epsilon c (g_h^{n+1} - \dy u_h^{n+1}, \dy \phi_i) = 0, \nonumber 
\end{align}
\noindent while \eqref{form: implicit-discrete-2}-\eqref{form: implicit-discrete-3} remain the same. The stability result is similar to lemma \ref{lemma: implicit-stability}, but with different norms: 
\begin{align}
       \| u_h^{n+1} \|_{l^2}^2 + \| u_h^{n+1} -w_h^{n+1} \|_{l^2}^2 + 2\epsilon \tau \| z_h^{n+1} - \dx u_h^{n+1} \|_{L^2}^2 + \\ 2 \epsilon\tau c \| g_h^{n+1} - \dy u_h^{n+1} \|_{L^2}^2 = \| w_h^{n+1} \|_{l^2}^2. \nonumber
\end{align}
\noindent The proof proceeds as follows: multiply \eqref{form: implicit-lumped-mass} by \(U_i^{n+1}\) and sum over \(i \in \mathcal{V}\). As the restrictions are the same, the proof continues as in lemma \ref{lemma: implicit-stability}.  
\end{remark}

\begin{remark}[Mass Conservation]\label{lemma: implicit-mass-conservation}
   By testing \eqref{form: implicit-discrete-1} with \(v_h = 1\), we deduce that dispersive update is mass conservative. The result holds as well if we use the lumped mass. 
\end{remark}

\subsubsection{Euler-IMEX}\label{sec: Euler-IMEX}
This section contains the stability and conservation results for the complete IMEX method. These results are direct consequences of the lemmas regarding the explicit and implicit sub-problems. 

\begin{theorem}[Stability with lumped mass]
\label{thm: imex-stability-theorem}
    Let \( n \in \N\). Let \(u_h^n \in V_h\), and let \(w_h^{n+1}\) be computed using the explicit hyperbolic prediction \eqref{form: explicit-hyperbolic-prediction}. Let \(u_h^{n+1} \in V_h\) be computed via the implicit dispersive update with lumped mass matrix, \ie solving \eqref{form: implicit-lumped-mass} and \eqref{form: implicit-discrete-2}-\eqref{form: implicit-discrete-3}. Then, for \(\tau \leq \tau^*\) with 
    \begin{equation*}
        \tau^*:= \min_{i \in \mathcal{V}} \frac{m_i}{ 4\sum_{j \in \mathcal{I}(i)} d_{ij}^n}, \label{cond: low-order-CFL}
    \end{equation*}
\noindent the following inequality holds: 
\begin{equation}
    \| u_h^{n+1} \|_{l^2} \leq \|u_h^n \|_{l^2}. \label{eqn: imex-stability-equation}
\end{equation}
\end{theorem}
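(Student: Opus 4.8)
The plan is to split the time step into its two sub-steps—the explicit hyperbolic prediction producing $w_h^{n+1}$ and the implicit dispersive update producing $u_h^{n+1}$—and to chain the stability bounds already established for each, taking care that both are measured in the \emph{same} weighted norm \eqref{eqn: l2-norm}.

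First I would check that the hypothesis $\tau \le \tau^*$ implies the CFL condition \eqref{eqn: idp-cfl-condition} needed to invoke Lemma \ref{lemma: maximum-principle}. The $j=i$ term plays no role in either sum: $\mathbf c_{ii} = \int_D \phi_i\grad\phi_i\dif x = \tfrac12\int_D\grad(\phi_i^2)\dif x = 0$ by periodicity, so $d_{ii}^n$ contributes nothing and $\sum_{j\in\mathcal I(i)}d_{ij}^n = \sum_{j\in\mathcal I(i)\setminus\{i\}}d_{ij}^n$. Hence $\tau\le\tau^*$ gives $2\tau\,\sum_{j\in\mathcal I(i)\setminus\{i\}}d_{ij}^n/m_i \le \tfrac12 < 1$ for every $i\in\mathcal V$ (with room to spare, since $\tau^*$ carries a factor $4$ where only $2$ is required), which is exactly \eqref{eqn: idp-cfl-condition}. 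Applying Lemma \ref{lemma: maximum-principle}, item 3, with the entropy pair $\eta(u)=u^2/2$ and then Corollary \ref{cor: l2-stability} yields $\|w_h^{n+1}\|_{\ell^2}\le\|u_h^n\|_{\ell^2}$.

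Next I would invoke the lumped-mass dispersive estimate of Remark \ref{remark: implicit-lumped-stability}. Since $u_h^{n+1}$ is defined by \eqref{form: implicit-lumped-mass} together with \eqref{form: implicit-discrete-2}--\eqref{form: implicit-discrete-3}, the energy identity stated there holds; discarding the three manifestly nonnegative terms $\|u_h^{n+1}-w_h^{n+1}\|_{\ell^2}^2$, $2\epsilon\tau\|z_h^{n+1}-\dx u_h^{n+1}\|_\Ltwo^2$ and $2\epsilon\tau c\|g_h^{n+1}-\dy u_h^{n+1}\|_\Ltwo^2$ from the left-hand side leaves $\|u_h^{n+1}\|_{\ell^2}\le\|w_h^{n+1}\|_{\ell^2}$, with no restriction on $\tau$. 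It is essential that the \emph{lumped}-mass variant is used rather than Lemma \ref{lemma: implicit-stability}: it produces the bound in the weighted norm $\|\cdot\|_{\ell^2}$ of \eqref{eqn: l2-norm}, the same norm in which the hyperbolic bound of Corollary \ref{cor: l2-stability} is phrased, so the two estimates compose with no mass-matrix equivalence constants intruding. Combining them gives $\|u_h^{n+1}\|_{\ell^2}\le\|w_h^{n+1}\|_{\ell^2}\le\|u_h^n\|_{\ell^2}$, which is \eqref{eqn: imex-stability-equation}.

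The only real obstacle, and the step I would write out most carefully, is this bookkeeping that makes the two sub-step bounds compatible: verifying that $\tau^*$ (with its factor $4$ and its sum over all of $\mathcal I(i)$) is indeed no larger than the threshold in \eqref{eqn: idp-cfl-condition}, and confirming that both stability results live in the identical weighted $\ell^2$ norm. Everything else is a direct citation of Corollary \ref{cor: l2-stability} and Remark \ref{remark: implicit-lumped-stability}.
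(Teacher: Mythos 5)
Your proposal is correct and follows the same route as the paper's own proof: chain the hyperbolic bound of Corollary \ref{cor: l2-stability} with the lumped-mass dispersive bound of Remark \ref{remark: implicit-lumped-stability} in the weighted norm \eqref{eqn: l2-norm}. The extra bookkeeping you supply (checking that $\tau\le\tau^*$ implies \eqref{eqn: idp-cfl-condition} and that both estimates live in the same norm) is detail the paper leaves implicit, but the argument is identical in substance.
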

\begin{proof}
    Let \(u_h^n \in V_h\) and \(\tau \leq \tau^\ast\). Then, by corollary \ref{cor: l2-stability}, since \( \tau \leq \tau^\ast\), \( \| w_h^{n+1} \|_{l^2} \leq \| u_h^{n} \|_{l^2}\). The analysis in remark \ref{remark: implicit-lumped-stability} implies that \(\| u_h^{n+1} \|_{l^2} \leq \| w_h^{n+1} \|_{l^2}\). Thus, \( \| u_h^{n+1} \|_{l^2} \\ \leq  \| u_h^n \|_{l^2} \). 
\end{proof}

This theorem shows the benefit of utilizing an IMEX method: since \(m_i = \mathcal{O}(h)\), the method only requires a CFL condition of \(\tau = \mathcal{O}(h)\) for stability. Furthermore, we require no nonlinear solves, as the nonlinearity is computed explicitly. 

\begin{remark}[Mass Conservation for Euler-IMEX]\label{remark: imex-mass-conservation}
If \(u_h^0 \in V_h\), then \( \int_D u_h^{n+1} \dif x \\ = \dots = \int_D u_h^0 \dif x\). This fact follows from lemma \ref{lemma: hyperbolic-mass-conservation} and the analysis done in remark \ref{lemma: implicit-mass-conservation}. 
\end{remark}

\section{High Order}\label{sec: high-order}
This section contains the details to make the IMEX method high order in time and space. The idea is to make the hyperbolic prediction \eqref{form: explicit-hyperbolic-prediction} high order in space by using higher degree polynomials in conjunction with high order graph viscosity and a limiting operator. The dispersive update \eqref{form: disp-mixed-form-1}-\eqref{form: disp-mixed-form-3} is made high order in space by using higher order polynomials in the definition of the finite element space. To make the time integration high order, we follow the work done in \cite{Guermond_Ern_23} and use a particular set of explicit and diagonally implicit Runge-Kutta methods (ERK and DIRK methods, respectively). These methods are constructed to be high order in time and mass conservative.

\subsection{High Order Hyperbolic Prediction}\label{sec: high-order-hyperbolic-prediction}
Let us outline the procedure to make the hyperbolic prediction high order in space. Begin by constructing a high order graph viscosity. For \(n \in \N\), let \(\{ \psi_k^n \}_{k \in \mathcal{V}}\) be a sequence of positive real numbers. The high order graph viscosity is given by: 
\begin{equation}
    d_{ij}^{H,n} := d_{ij}^n \max(\psi_i^n, \psi_j^n). \label{eqn: high-order-graph-viscosity}
\end{equation}
\noindent There are several ways to choose \(\psi_k^n\). Examples include \textit{smoothness based graph viscosity}, where \(\psi_k^n\) are chosen based upon the local smoothness of the solution, \textit{greedy graph viscosity}, where \(\psi_k^n\) are based on a set of values derived from the solution and the \(d_{ij}^n\)s, and \textit{entropy-based graph viscosity}, where the \(\psi_k^n\) are chosen using entropy inequalities. Further details on all of these examples can be found in \citep{Guermond_Ern_FE_III}.

\par The next step is to utilize the consistent mass matrix, given by \(m_{ij} := \int_D \phi_i \phi_j \dif x\), in place of the lump-mass matrix. These two changes give rise to the following hyperbolic prediction: 
\begin{equation}
    \sum_{j \in \mathcal{I}(i)} \frac{m_{ij}}{\tau}(W_j^{n+1} - U_j^n) + \sum_{j \in \mathcal{I}(i)} \left( \mathbf{f}(U_j^n) \cdot \mathbf{c}_{ij} - d_{ij}^{H,n}(U_j^n  - U_i^n) \right) = 0. \label{form: high-order-explicit}
\end{equation}
With this expression and \(\eqref{form: explicit-hyperbolic-prediction}\) in mind, we define the following maps: \(\mathbf{F}^L: \R^I \to \R^I\) and \(\mathbf{F}^H: \R^I \to \R^I\) via: 
\begin{align}
        (\mathbf{F}^L(U))_i & = \sum_{j \in \mathcal{I}(i)} \vecf(U_j) \cdot \mathbf{c}_{ij} - d_{ij}^n(U_j - U_i), \label{form: low-order-flux}  \\
            (\mathbf{F}^H(U))_i & = \sum_{j \in \mathcal{I}(i)} \vecf(U_j)\cdot \mathbf{c}_{ij} - d^{H,n}_{ij}(U_j - U_i). \label{form: high-order-flux}
\end{align}
The low and high order hyperbolic predictions can be written using \eqref{form: low-order-flux} and \eqref{form: high-order-flux} as follows: 
\begin{align}
      \mass^L W^{L,n+1}&  = \mass^L U^n - \tau \mathbf{F}^L(U^n), \label{eqn: low-order-hyperbolic-prediction}\\
      \mass^H W^{H,n+1} & = \mass^H U^n - \tau \mathbf{F}^H(U^n), \label{eqn: high-order-hyperbolic-prediction}
\end{align}
\noindent where \((\mass^L)_i:= \int_D \phi_i \dif x\) denotes the lumped mass matrix and \((\mass^H)_{ij} = m_{ij} \) denotes the consistent mass matrix.  
\par The next step consists of using a limiting operator. Since there are many such operators in the literature, we formalize this technology via the following assumption. 

\begin{assumption}\label{assumption: limiter-assumption}
    There exists a map, called a \textit{limiter}, \(\ell:[a,b]^I \times \R^I \times \R^I \to \R^I\) such that the following properties hold: 
    \begin{enumerate}
        \item (Maximum principle) If \(\mass^L U^n - \tau \mathbf{F}^L(U^n) \in [a,b]^I\), then \\\(\ell(U^n, \mathbf{F}^L(U^n), \mathbf{F}^H(U^n))\in [a,b]^I\). 
        \item (Mass conservation) Let \(U^{n+1} := \ell( U^n, \mathbf{F}^L(U^n), \mathbf{F}^H(U^n))\). Then \\ \(\sum_{i \in \stencil} U_i^{n+1} m_i = \sum_{i \in \stencil} U_i^n m_i\). 
    \end{enumerate}
\end{assumption}

Examples of such a limiter include \textit{flux corrected transport}, found in \cite{Zalesak_79} and \cite{Kuzmin_12} and \textit{convex limiting}, which relies upon quasi-concave functionals, found in \citep{Guermond_Ern_FE_III}.    

\par We now have all the pieces to describe the high order (in space) hyperbolic prediction: given \(u_h^n \in V_h\), compute \(w_h^{L,n+1}\) and \(w_h^{H,n+1}\) via \eqref{eqn: low-order-hyperbolic-prediction} and \eqref{eqn: high-order-hyperbolic-prediction}. Then we compute \(w_h^{n+1}\) via: 
\begin{equation}
    W^{n+1} = \ell(U^{n}, \mathbf{F}^L(U^n), \mathbf{F}^H(U^n)). \label{eqn: hyperbolic-limiter} 
\end{equation}
\begin{lemma}
    Let \(n \in \N\) and let \eqref{cond: low-order-CFL} hold. Let \(u_h^n \in V_h\) and let \(w_h^{L,n+1}\) be computed using \eqref{eqn: low-order-hyperbolic-prediction}, \(w_h^{H,n+1} \in V_h\) with \eqref{eqn: high-order-hyperbolic-prediction}, and \(w_h^{n+1} \in V_h\) by applying \eqref{eqn: hyperbolic-limiter} to \(w_h^{L,n+1}\) and \(w_h^{H,n+1}\). Then, \( \int_D w_h^{n+1} \dif x = \int_D u_h^n \dif x \). \label{lemma: high-order-explicit-conservation}
\end{lemma}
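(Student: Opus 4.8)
The plan is to reduce the claim directly to the mass-conservation property of the limiter (Assumption~\ref{assumption: limiter-assumption}, item~2), after first checking that the CFL condition \eqref{cond: low-order-CFL} guarantees the hypothesis of that property is satisfied. The key observation is that $\int_D w_h^{n+1}\dif x = \sum_{i\in\mathcal{V}} W_i^{n+1} m_i$, since $m_i = \int_D \phi_i\dif x$, and likewise $\int_D u_h^n\dif x = \sum_{i\in\mathcal{V}} U_i^n m_i$. So the statement is \emph{exactly} the discrete identity $\sum_i W_i^{n+1} m_i = \sum_i U_i^n m_i$ that Assumption~\ref{assumption: limiter-assumption}(2) provides for $W^{n+1} = \ell(U^n, \mathbf{F}^L(U^n), \mathbf{F}^H(U^n))$, which is precisely how $w_h^{n+1}$ is defined in \eqref{eqn: hyperbolic-limiter}.

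First I would write out $\int_D w_h^{n+1}\dif x$ using the expansion $w_h^{n+1} = \sum_{i\in\mathcal{V}} W_i^{n+1}\phi_i$ and linearity of the integral, obtaining $\sum_{i} W_i^{n+1} m_i$; do the same for $u_h^n$. Second, I would note that the hypothesis ``$\mass^L U^n - \tau\mathbf{F}^L(U^n)\in[a,b]^I$'' needed to invoke the limiter's mass-conservation property is in fact \emph{not} needed here: re-reading Assumption~\ref{assumption: limiter-assumption}, item~2 (mass conservation) is stated unconditionally, unlike item~1 (the maximum principle), which does require the CFL-type bound. So the CFL hypothesis \eqref{cond: low-order-CFL} in the lemma statement is only there to make $w_h^{L,n+1}$ a ``good'' low-order state (it is harmless / unused for the conservation claim, or is invoked merely to ensure the low-order update is well defined and bounded). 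Third, apply Assumption~\ref{assumption: limiter-assumption}(2) with this particular $W^{n+1}$ to conclude $\sum_i W_i^{n+1} m_i = \sum_i U_i^n m_i$, and translate back through the first step to get $\int_D w_h^{n+1}\dif x = \int_D u_h^n\dif x$.

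There is no real obstacle: the lemma is essentially a restatement of the limiter axiom in the language of finite element functions rather than coefficient vectors. The only mild subtlety worth a sentence in the proof is the identification $\int_D v_h\dif x = \sum_i V_i m_i$ for $v_h = \sum_i V_i\phi_i\in V_h$, which is immediate from the definition $m_i = \int_D\phi_i\dif x$; this is where the partition-of-unity assumption is implicitly compatible, though not strictly needed for the identity itself. If one wanted to be scrupulous about invoking the limiter, one could remark that the vectors $\mathbf{F}^L(U^n)$ and $\mathbf{F}^H(U^n)$ fed to $\ell$ are exactly those in \eqref{form: low-order-flux}--\eqref{form: high-order-flux}, and the CFL condition ensures $\mass^L U^n - \tau\mathbf{F}^L(U^n)$ is the bounded low-order state so that the maximum-principle branch (item~1) is also available, but for \emph{this} lemma only item~2 is used. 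So the whole proof is three short lines; I expect the write-up to be a single short paragraph.

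\begin{proof}
For any $v_h = \sum_{i\in\mathcal{V}} V_i\phi_i\in V_h$ we have $\int_D v_h\dif x = \sum_{i\in\mathcal{V}} V_i\int_D\phi_i\dif x = \sum_{i\in\mathcal{V}} V_i m_i$. Applying this to $w_h^{n+1} = \sum_{i\in\mathcal{V}} W_i^{n+1}\phi_i$ and to $u_h^n = \sum_{i\in\mathcal{V}} U_i^n\phi_i$ gives $\int_D w_h^{n+1}\dif x = \sum_{i\in\mathcal{V}} W_i^{n+1} m_i$ and $\int_D u_h^n\dif x = \sum_{i\in\mathcal{V}} U_i^n m_i$. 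By definition \eqref{eqn: hyperbolic-limiter}, $W^{n+1} = \ell(U^n, \mathbf{F}^L(U^n), \mathbf{F}^H(U^n))$ with $\mathbf{F}^L, \mathbf{F}^H$ given by \eqref{form: low-order-flux}--\eqref{form: high-order-flux}, so the mass-conservation property in Assumption~\ref{assumption: limiter-assumption} yields $\sum_{i\in\mathcal{V}} W_i^{n+1} m_i = \sum_{i\in\mathcal{V}} U_i^n m_i$. Combining the three displayed identities gives $\int_D w_h^{n+1}\dif x = \int_D u_h^n\dif x$.
\end{proof}
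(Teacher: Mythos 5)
Your proof is correct, but it assembles the result differently from the paper. You read item~2 of Assumption~\ref{assumption: limiter-assumption} literally: the limited coefficient vector satisfies \(\sum_{i\in\mathcal{V}} W_i^{n+1} m_i = \sum_{i\in\mathcal{V}} U_i^n m_i\), and then you translate both sides into integrals via \(\int_D v_h \dif x = \sum_i V_i m_i\), which settles the lemma in one step and never touches the low-order state. The paper instead factors through \(w_h^{L,n+1}\): it first invokes Lemma~\ref{lemma: hyperbolic-mass-conservation} to get \(\int_D w_h^{L,n+1}\dif x = \int_D u_h^n \dif x\), and then uses the limiter's conservation property in the form \(\int_D w_h^{n+1}\dif x = \int_D w_h^{L,n+1}\dif x\), combining the two. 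Your route is shorter and stays closer to the assumption as stated; the paper's route makes explicit the mechanism by which concrete limiters (FCT, convex limiting) are conservative --- they produce a mass-preserving correction of the low-order update, so conservation of the limited state is naturally established relative to \(w_h^{L,n+1}\) and then chained back to \(u_h^n\) through the low-order mass-conservation lemma. Your side remark is also accurate: neither proof actually uses the CFL hypothesis \eqref{cond: low-order-CFL} for the conservation identity (Lemma~\ref{lemma: hyperbolic-mass-conservation} does not require it); it is carried along so that the low-order state and the maximum-principle branch of the limiter are meaningful in context.
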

\begin{proof}
     By lemma \ref{lemma: hyperbolic-mass-conservation} \(\int_D w_h^{L,n+1} \dif x = \int_D u_h^n \dif x \). Then, since the limiter is conservative by assumption, we obtain \(\int_D w_h^{n+1} \dif x = \int_D w_h^{L,n+1} \dif x \). Combining these two equalities gives the result. 
\end{proof}

\subsection{High Order Dispersive Update}\label{sec: high-order-disp-update}
\par To alleviate notation, we introduce an operator \(\mathbf{G}:V_h \to V_h\) defined by 
\begin{align}
        (\mathbf{G}(u_h), v_h) = - \epsilon( \dx z_h(u_h), \dx v_h) - \epsilon( \dx q_h(u_h), \dy v_h) - \\ \epsilon c( z_h(u_h) - \dx u_h, \dx v_h) - \epsilon c( q_h(u_h) - \dx u_h, \dx v_h), \qquad \forall v_h \in V_h \nonumber \label{form: discrete-disp-operator}
\end{align}
\noindent where \(z_h(u_h), q_h(u_h)\) are defined by satisfying the following system: 
\begin{align}
    (z_h - \dx u_h, c s_h - \dx s_h) = 0 & ~~~ \forall s_h \in V_h,  \label{eqn: disp-restriction-1} \\
    (q_h - \dy u_h, c r_h - \dx r_h) = 0 & ~~~ \forall r_h \in V_h.  \label{eqn: disp-restriction-2} 
\end{align}
\begin{proposition}
    The map \(\mathbf{G}\) is well defined. 
\end{proposition}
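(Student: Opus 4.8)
The plan is to check, for each fixed $u_h \in V_h$, that the defining relation \eqref{form: discrete-disp-operator} determines a unique element $\mathbf{G}(u_h) \in V_h$. Two things have to be established: first, that the auxiliary fields $z_h(u_h)$ and $q_h(u_h)$ appearing on the right-hand side are themselves well defined through \eqref{eqn: disp-restriction-1}--\eqref{eqn: disp-restriction-2}; second, that the resulting linear functional on $V_h$ is represented by a unique element of $V_h$.

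For the first point I would fix $u_h \in V_h$ and view \eqref{eqn: disp-restriction-1} as a square linear system for the coefficients of $z_h$: after identifying $V_h$ with its dual via the $L^2(D)$ pairing, the map $z_h \mapsto \big( s_h \mapsto (z_h,\, c s_h - \dx s_h) \big)$ is a linear endomorphism of $V_h$, and $s_h \mapsto (\dx u_h,\, c s_h - \dx s_h)$ is a fixed linear functional. By the rank-nullity theorem it suffices to prove injectivity, i.e. that $(z_h,\, c s_h - \dx s_h) = 0$ for all $s_h \in V_h$ forces $z_h = 0$. Testing with $s_h = z_h$ gives $c \| z_h \|_{L^2(D)}^2 - (z_h, \dx z_h) = 0$, and since $z_h$ is periodic on $D$ we have $(z_h, \dx z_h) = \tfrac12 \int_D \dx(z_h^2)\dif x = 0$; as $c > 0$, this yields $z_h = 0$. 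The identical argument applied to \eqref{eqn: disp-restriction-2} produces a unique $q_h(u_h)$. This is the same rank-nullity reasoning already invoked in Remark \ref{remark: discrete-disp-well-posed}.

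For the second point, once $z_h(u_h)$ and $q_h(u_h)$ are fixed the right-hand side of \eqref{form: discrete-disp-operator} is manifestly a linear functional of $v_h$ on the finite-dimensional inner product space $(V_h, (\cdot,\cdot))$; by the Riesz representation theorem — equivalently, by invertibility of the symmetric positive-definite consistent mass matrix $(m_{ij})$, which holds because $\{\phi_i\}_{i\in\mathcal{V}}$ is a basis of $V_h$ — there is a unique $\mathbf{G}(u_h) \in V_h$ satisfying \eqref{form: discrete-disp-operator} for every $v_h \in V_h$. This defines $\mathbf{G}$ unambiguously as a map $V_h \to V_h$.

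I do not expect a genuine obstacle. The only point demanding care is that the bilinear form $(z_h, s_h) \mapsto (z_h,\, c s_h - \dx s_h)$ is not symmetric, so well-posedness of the auxiliary problems cannot be read off coercivity of the form directly; instead one tests with $s_h = z_h$ and uses periodicity to annihilate the skew-symmetric contribution $(z_h, \dx z_h)$. Everything else is routine finite-dimensional linear algebra.
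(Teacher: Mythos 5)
Your proof is correct and follows essentially the same route as the paper: the core step in both is to reduce well-posedness of the auxiliary system \eqref{eqn: disp-restriction-1}--\eqref{eqn: disp-restriction-2} to injectivity, test with $s_h = z_h$, and use periodicity to annihilate the $(z_h,\dx z_h)$ term so that $c>0$ forces $z_h=0$. You merely make explicit two points the paper leaves implicit (the rank--nullity framing of the square system and the final Riesz-representation step identifying $\mathbf{G}(u_h)$ through the invertible mass matrix), and your sign in $c\|z_h\|_{L^2}^2 - (z_h,\dx z_h)=0$ is in fact the correct one, the paper's ``$+$'' being a harmless typo since the term vanishes either way.
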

\begin{proof}
    Let \(u_h \in V_h\). It suffices to show that system \eqref{eqn: disp-restriction-1}-\eqref{eqn: disp-restriction-2} can be solved when \(u_h\) is known. Suppose \(u_h = 0\). Then \((z_h, cs_h - \dx s_h) = 0\). Now we test with \(s_h = z_h\). This yields \(c\| z_h\|_{L^2}^2 + ( z_h , \dx z_h) = 0\). However, since \(z_h\) is periodic, this leaves us with \( c \| z_h \|_{L^2}^2  = 0\), and thus \(z_h = 0\). The same argument applies to \(q_h\). Thus the system is well-posed when \(u_h\) is given, and thus \(\mathbf{G}\) is well-defined.
\end{proof}

\begin{remark}[Lack of limiting on dispersive update]
Contrary to \citep{Guermond_Ern_23}, we do not limit the implicit update. There is no limiting because the dispersive step does not satisfy a maximum principle.  
\end{remark}

\subsection{High order in time}\label{sec: high-order-imex}
This section contains the method for solving \eqref{eqn: scalar-dispersive-equation} with high order accuracy in time; we follow the work of \citep{Guermond_Ern_23}, with some modifications. It consists of an explicit Runge-Kutta (ERK) method with a diagonally implicit Runge-Kutta method that has a fully explicit first stage (EDIRK), and we assume that both schemes consist of \(s\) stages with \( s \geq 2\).  

\begin{table}[H]
    \centering
    \footnotesize
    \begin{tabular}{cc}
       \begin{tabular}{c|ccccc}
        0 & 0  \\
        $c_2$ & $a_{2,1}^e$  \\
        $c_3$ & $a_{3,1}^e$ & $0$ \\
        $\vdots$ & $\vdots$ & $\ddots$ & $\ddots$ \\
        $c_s$ & $a_{s,1}^e$ & $\dots$ & $a_{s,s-1}^e$ & $0$ \\ \hline
        & $a_{s+1,1}^e$ &  \dots & $a_{s+1,s-1}^e$ & $a^e_{s+1, s}$
    \end{tabular}   & \begin{tabular}{c|cccccc}
        0 & 0  \\
        $c_2$ & $a_{2,1}^i$ & $a_{2,2}^i$ \\
        $c_3$ & $a_{3,1}^i$ & $a_{3,2}^i$ \\
        $\vdots$ & $\vdots$ & $\ddots$ & $\ddots$ \\
        $c_s$ & $a_{s,1}^i$ & $a_{s,2}^i$ & $\dots$ & $a_{s,s}^i$ \\ \hline
        & $a_{s+1,1}^i$ & $a_{s+1,2}^i$ & \dots &$a_{s+1,s}^i$ & 0
    \end{tabular}  \\
    \end{tabular}
    \caption{Left: Explicit Runge-Kutta (ERK) tableau. Right: Diagonally Implicit Runge-Kutta (DIRK) tableau.}
    \label{tab:butcher-tableaux}
\end{table}
\noindent We assume that \((c_l)_{l \in \set{1:s}}\) is nondecreasing and define the following quantity: 
\begin{equation}
    \Delta c^{\text{max}} := \max_{2 \leq l \leq s+1} (c_{l} - c_{l-1}). 
\end{equation}

\noindent Observe that \(\Delta c^{\text{max}} \geq \frac{1}{s}\) and \(\Delta c^{\text{max}} = \frac{1}{s}\) when all stages are equi-distributed, \ie \(c_l = \frac{l-1}{s}, ~l \in \{1:s+1\}\). We further assume 
\begin{equation}
\sum_{l \in \set{1:j}} a_{j,l}^e = \sum_{l \in \set{1:j}} a_{j,l}^e = c_j \quad \forall j \in \set{1:s}. 
\end{equation}
We are now prepared to explain the high order IMEX method. Given \(U^n \in \R^I\), set \(U^{n,1} := U^n\), and decompose each stage \(l \in \set{2:s+1}\) into the following steps: 

\begin{equation}
    U^{n,l-1} \underbrace{ \longrightarrow (W^{n,L,l}, W^{n,H,l}) \longrightarrow W^{n,l}}_{\text{Hyperbolic prediction}}
    \underbrace{\longrightarrow U^{n,l}}_{\text{Dispersive Update}}.
\end{equation}

\noindent The update is given by \(U^{n+1} = U^{n,s+1}\). 

\subsubsection{Hyperbolic Prediction}
We first compute the low-order and high-order hyperbolic predictions defined by: 
\begin{align}
        \mass^L W^{L,l} =&\mass^L U^{n,l-1} + \tau (c_l - c_{l-1}) \mathbf{F}^L(U^{n,l-1}), \label{form: imex-hyperbolic-low-order}\\
            \mass^H W^{H,l} =&  \mass^HU^{n, l-1}  + \tau \sum_{k \in \set{1:l-1}} (a^e_{l,k} - a_{l-1,k}^e) \mathbf{F}^H(U^{n,k}), \label{form: imex-hyperbolic-high-order} \\
                W^{n,l} = & \ell(U^{n,l-1}, \mathbf{F}^L(U^{n, l-1}), \mathbf{F}^H(U^{nl-1})). \label{form: imex-hyperbolic-limited-state}
\end{align}
\vspace{-10pt}
\begin{remark}[Incremental method]
    The stages are presented in incremental form for two reasons. The first is that the limiter requires that the low and high order updates come from the same stage, namely \(U^{n,l-1}\). Secondly, the time step size is maximized so that each stage uses the \(\tau^\ast\) time step so that a single IMEX step advances \(\tau \times s\) in time. Further commentary is provided in remark \ref{remark: max-efficient-implementation}. 
\end{remark}
\subsubsection{Dispersive Update}
 Once we compute \(W^{n,l}\), we compute the parabolic update at stage \(l \in \{2:s+1\}\) by solving the following system: 
\begin{equation}
    \mass^H U^{n,l} + \tau \mathbf{G}(U^{n,l}) = \mass^H W^{n,l} - \tau \sum_{k \in \set{1: l-1}} (a_{l,k}^i - a_{l-1,k}^i) \mathbf{G}(U^{n,k}). \label{form: imex-disp-upate}
\end{equation}
\begin{theorem}\label{thm: high-order}
Let \( n \in \N\). Let \(\tau^\ast\) be same as in \ref{lemma: maximum-principle} and let
\begin{equation}
    \tau \in \left( 0, \frac{\tau^\ast}{\Delta c^{\max}} \right]. \label{cond: imex-CFL-condition}
\end{equation}
\noindent Let \(U^n \in \R^I\). Consider the \(s\)-stage IMEX scheme composed of the \eqref{form: imex-hyperbolic-low-order} - \eqref{form: imex-disp-upate} for \(l \in \set{2:s+1}\). Then the scheme satisfies the following: 
\begin{enumerate}
    \item It is well defined.
    \item It is conservative.
\end{enumerate}
\end{theorem}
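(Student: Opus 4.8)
The plan is to argue by induction on the stage index $l \in \set{2:s+1}$, starting from $U^{n,1} := U^n$. Since each stage is a hyperbolic prediction followed by a dispersive update, both claims should reduce to properties already in hand for these two blocks, so the proof will largely be assembly.

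For well-definedness, I would fix $l$ and assume $U^{n,k}$ has been built for all $k \le l-1$. Step \eqref{form: imex-hyperbolic-low-order} is explicit because $\mass^L$ is diagonal with $m_i > 0$, so $W^{L,l}$ is immediate; step \eqref{form: imex-hyperbolic-high-order} needs $\mass^H$ invertible, which holds since it is the Gram matrix of the linearly independent global shape functions, hence symmetric positive definite. The limiter $\ell$ is a prescribed map (Assumption \ref{assumption: limiter-assumption}); to legitimately apply it in \eqref{form: imex-hyperbolic-limited-state} one needs the maximum-principle hypothesis of Lemma \ref{lemma: maximum-principle} for the low-order sub-step with effective step $\tau(c_l - c_{l-1})$, and here the role of the CFL condition \eqref{cond: imex-CFL-condition} is precisely to force $\tau(c_l - c_{l-1}) \le \tau\,\Delta c^{\max} \le \tau^\ast$, so that \eqref{cond: low-order-CFL}, and hence the hypothesis of Lemma \ref{lemma: maximum-principle}, holds at every stage. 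It then remains to show that the dispersive solve \eqref{form: imex-disp-upate}, a linear system $(\mass^H + \tau\mathbf{G})U^{n,l} = R^{n,l}$ whose right-hand side $R^{n,l}$ depends only on $W^{n,l}$ and the previously computed $U^{n,k}$, is uniquely solvable; since $\mathbf{G}$ is linear this amounts to injectivity of $\mass^H + \tau\mathbf{G}$, which I would obtain by testing $\mass^H u_h + \tau\mathbf{G}(u_h) = 0$ against $u_h$, eliminating $z_h, q_h$ through \eqref{eqn: disp-restriction-1}--\eqref{eqn: disp-restriction-2} as in the proof of Lemma \ref{lemma: implicit-stability}, and landing on $\| u_h \|_\Ltwo^2 + \tau\epsilon c\|z_h - \dx u_h\|_\Ltwo^2 + \tau\epsilon c\|q_h - \dy u_h\|_\Ltwo^2 = 0$, forcing $u_h = 0$. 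Induction then gives that $U^{n+1} = U^{n,s+1}$ is well defined.

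For conservation, which here means $\sum_{i \in \mathcal{V}} U_i^{n+1} m_i = \sum_{i \in \mathcal{V}} U_i^n m_i$, i.e. $\int_D u_h^{n+1}\dif x = \int_D u_h^n\dif x$, I would show that each stage preserves $\sum_i U_i m_i$ and then telescope over $l = 2,\dots,s+1$. The hyperbolic half is exactly the limiter's mass-conservation property, Assumption \ref{assumption: limiter-assumption}(2), giving $\sum_i W_i^{n,l} m_i = \sum_i U_i^{n,l-1} m_i$. For the dispersive half I would test \eqref{form: imex-disp-upate} with $v_h = 1 = \sum_i \phi_i$: each $\mass^H$ then pairs as $\int_D(\cdot)\dif x$, while $(\mathbf{G}(u_h),1) = 0$ for every $u_h$ because every term in \eqref{form: discrete-disp-operator} carries a factor $\dx 1 = 0$ or $\dy 1 = 0$ (the content of Remark \ref{lemma: implicit-mass-conservation}), so all the $\mathbf{G}$-contributions drop and $\int_D u_h^{n,l}\dif x = \int_D w_h^{n,l}\dif x$, i.e. $\sum_i U_i^{n,l} m_i = \sum_i W_i^{n,l} m_i$. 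Chaining, $\sum_i U_i^{n,l} m_i = \sum_i U_i^{n,l-1} m_i$ for every $l$, and telescoping closes the proof.

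The hard part, such as it is, will be the well-posedness of the multi-stage dispersive solve: it hinges on the positive semidefiniteness $(\mathbf{G}(u_h),u_h) \ge 0$ coming from the elimination of the auxiliary variables, but this is a verbatim replay of the energy identity already behind Lemma \ref{lemma: implicit-stability}, so no new estimate is needed. What really needs care is the bookkeeping: verifying that the global CFL bound \eqref{cond: imex-CFL-condition} downgrades to the per-stage bound \eqref{cond: low-order-CFL} through $\Delta c^{\max}$, so that Lemma \ref{lemma: maximum-principle}, and with it the limiter, can be invoked at each of the $s$ stages, and keeping the stage indices straight in both the hyperbolic increments and the $\mathbf{G}(U^{n,k})$ sums of \eqref{form: imex-disp-upate}.
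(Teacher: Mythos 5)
Your proposal is correct and follows essentially the same route as the paper: induction over the stages, with the hyperbolic prediction handled by the explicit structure plus the limiter's conservation property (the paper cites Lemma \ref{lemma: high-order-explicit-conservation}) and the dispersive solve handled by the energy identity behind Lemma \ref{lemma: implicit-stability} (the paper invokes Remark \ref{remark: discrete-disp-well-posed}) together with \((\mathbf{G}(U),\mathbf{1})=0\). The only difference is that you spell out details the paper leaves implicit, notably the invertibility of \(\mass^H+\tau\mathbf{G}\) and the reduction of the global CFL bound \eqref{cond: imex-CFL-condition} to the per-stage condition via \(\Delta c^{\max}\), which is a welcome addition rather than a deviation.
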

\begin{proof}
    Let \(U^n \in \R^I\) and assume \ref{cond: imex-CFL-condition} holds true. We are going to show that \(U^{n,l}\) is well defined and \(\sum_{i \in \mathcal{V}} U^{n,l}_i m_i = \sum_{i \in \mathcal{V}} U_i^{n}m_i\). We do so by induction. Since \(U^n = U^{n,1}\), conditions (1) and (2) are trivially satisfied. Now let \(U^{n,l-1} \in \R^I\). Since \eqref{form: imex-hyperbolic-low-order} -\eqref{form: imex-hyperbolic-limited-state} are either explicit or utilize the limiter, \(W^{n,l}\) is well-defined. Furthermore, lemma \ref{lemma: high-order-explicit-conservation} implies that \(\sum_{i \in \mathcal{V}}W_i^{n,l}m_i = \sum_{i \in \mathcal{V}} U_i^{n,l-1} m_i\). Finally, remark \ref{remark: discrete-disp-well-posed} implies that \eqref{form: imex-disp-upate} can always be solved for \(U^{n,l}\); the fact that \(( \mathbf{G}(U), \mathbf{1}) = 0\) for all \(U \in \R^I\) and the analysis done in lemma \ref{lemma: implicit-mass-conservation} imply that \(\sum_{i \in \mathcal{V}} U_i^{n,l} = \sum_{i \in \mathcal{V}} W_i^{n,l} m_i = \sum_{i \in \mathcal{V}} U_i^{n,l-1} m_i\). This completes the proof. 
\end{proof}

\begin{remark}[Maximally Efficient Implementation]
In the case when the \(c_j\) coefficients are equi-distributed, we modify the implementation so that the computed time step \(\tau^\ast>0\) from the low-order hyperbolic prediction may be used at each stage. To do so, note \(\Delta c^{\text{max}} = c_l - c_{l-1} = \frac{1}{s}\) for all \(j \in \set{2:s+1}\). Thus, setting \(\tau = \frac{\tau^\ast}{\Delta c^{\text{max}}}\) implies \(\tau = \tau^\ast \times s\). Looking at equations \eqref{form: high-order-explicit} and \eqref{form: imex-disp-upate}, we can apply \(s\) to the coefficients, and so when assembling the sum, we use \(s(a_{l,k} - a_{l-1,k})\). This allows us to use \(\tau^\ast\) as the time step. \label{remark: max-efficient-implementation}
\end{remark}

\subsection{Extensions}
Let us mention a few possible extensions of this method. The first involves including a viscous term \(- \nu \dxx u\) in the model, which is sometimes known as the KdV-Burgers Equation (\cite{Canosa_Gazdag_77}). The viscous term is treated implicitly. Given the method's agnosticism to the hyperbolic flux, it can be extended to equations of the form: 
\begin{equation}
    \dt u + \dx (f(u)) - \nu \dxx u + \alpha \dxxx u = 0. \label{eqn: kdv-burgers}
\end{equation}
The method can also be extended to systems of KdV-like equations. One such case is found in \cite{Bona_Chen_Karakashian_Wise_18}, where the authors consider systems of the form: 
\begin{align}
    \dt u + \dxxx u + \dx P(u,v) & = 0, \\
    \dt v + \dxxx v + \dx Q(u,v) & = 0.
\end{align}

\noindent The functions \(P\) and \(Q\) are assumed to produce a hyperbolic flux \(\mathbf{L}(\mathbf{U}) := \\ (P(u,v), Q(u,v))^T\), where \(\mathbf{U}:= (u,v)^T\). Then the low order method consists of solving
\vspace{-10pt}
\begin{equation}
    m_i \frac{\mathbf{W}_i^{n+1} - \mathbf{U}_i}{\tau} + \sum_{j \in \mathcal{I}(i)} \mathbf{L}(\mathbf{U}_j^n) \cdot \mathbf{c}_{ij} - d_{ij}^n(\mathbf{U}_j^n - \mathbf{U}_i^n) = 0,
\end{equation}

\noindent followed by solving the dispersive parts for \(u\) and \(v\) separately. 

\section{Numerical simulations}\label{sec:illustrations}

To illustrate the method's capabilities, we present several numerical simulations (for both verification and validation). The results include convergence tables which demonstrate the method's accuracy and benchmark tests to show the method's ability to solve standard problems. The numerical method is implemented in a stand-alone code based on the \verb|deal.II| finite element library (\cite{dealII95}) and the high-performance code \verb|ryujin| (\cite{ryujin-2021-1, ryujin-2021-3}). 

\subsection{Verification}
In this section, we present convergence tables to demonstrate the method's accuracy. We compute convergence rates by taking a known analytic solution and measuring the error between the exact solution and the numerical solution. For the verification test, we solve the KdV equation \eqref{eqn: kdv} on the domain \(D = (-10,10)\) with exact solution 
\begin{equation}
    u(t,x) = 2 \sech^2( x - 4t), \label{eqn: kdv-solution}
\end{equation}
and final time \(T>0\). At \(t =T\), we measure the error between the exact and numerical solutions using the quantity \(\text{err}_\infty(T) := \| u_h^N - u(T) \|_{L^\infty(D)}/ \| u(T) \|_{L^\infty(D)}\). 

\par We run the test using the Euler-IMEX, IMEX(2,2;1), IMEX(3,3;1), and IMEX(5,4 ;1) methods found in \citep{Guermond_Ern_23}. For the high-order graph viscosity, we set \(d_{ij}^{H,n} =0\) and use the convex limiting technique detailed in \citep[Chapter 83.2.4]{Guermond_Ern_FE_III}; also see \cite{Guermond_Nazarov_Popov_Tomas_18}. We report the results in Table \ref{tab: imex-convergence}. 

\subsubsection{Convergence} Let us briefly describe Table \ref{tab: imex-convergence}. There are four subtables, each reporting the results for a different test. Each subtable reports the number of degrees of freedom ``\# Dofs'' used for the \(u\)-variable (that is, the degrees of freedom used for the auxiliary variable are not accounted for), the error \(\text{err}_\infty(T)\), and the convergence rate. The top left subtable shows the results using the low-order Euler-IMEX method using \(\mathbb{P}_1\) elements on seven meshes with final time \(T =1 \). The top right substable shows the results for the second order IMEX(2,2;1) method with \(\mathbb{P}_1\) elements on seven meshes with final time \(T=0.5\). The bottom left subtable shows the results for the IMEX(3,3;1) method with \(\mathbb{P}_2\) elements on six meshes with final time \(T =0.5\), and the bottom right shows the results for the IMEX(5,4;1) method with \(\mathbb{P}_3\) elements on six meshes with final time \(T = 0.5\). 
\begin{table}[h!]
    \centering
    \begin{tabular}{ccc|ccc}
    & Euler-IMEX & & & IMEX(2,2;1) & \\
    \#Dofs  & \(\text{err}_\infty(1)\) & rate & \#Dofs & \(\text{err}_\infty(0.5)\) & rate \\ \hline
    129 & 4.32 E-01 & -- & 33 & 3.40 E-01 & -- \\
    257 & 2.95 E-01 & 0.55 & 65 & 1.15 E-01 & 1.60 \\
    513 & 1.79 E-01 & 0.72 & 129 & 3.15 E-02 & 1.89 \\
    1025 & 1.00 E-01 & 0.84 & 257 & 7.82 E-03 & 2.02 \\
    2049 & 5.31 E-02 & 0.91 & 513 & 1.97 E-03 & 1.99 \\
    4079 & 2.73 E-02 & 0.96 & 1025 & 4.90 E-04 & 2.01 \\ 
    8193 & 1.39 E-02 & 0.97 & 2049 & 1.22 E-04 & 2.00 \\ \hline
    & IMEX(3,3;1)& & & IMEX(5,4;1) & \\
    \#Dofs  & \(\text{err}_\infty(0.5)\) & rate & \#Dofs & \(\text{err}_\infty(0.5)\) & rate \\ \hline
    33 & 1.12 E-01 & -- & 49 & 5.46 E-02 & -- \\
    65 & 3.00 E-02 & 1.95 & 97 & 1.43 E-02 & 1.96 \\
    129 & 3.88 E-03 & 2.98 & 193 & 1.83 E-03 & 2.99 \\
    257 & 4.38 E-04 & 3.16 & 385 & 2.01 E-04 & 3.20 \\
    513 & 3.52 E-05 & 3.65 & 769 & 3.98 E-05 & 2.34 \\
    1025 & 3.03 E-06 & 3.55 & 1537 & 3.77 E-06 & 3.41 \\
    \end{tabular}
    \caption{Convergence tables for the KdV equation \eqref{eqn: kdv} on \(D = (-10,10)\). Top left: Euler-IMEX time stepping with CFL = 0.5. Top Right: IMEX(2,2;1) time stepping with CFL = 0.05. Bottom Left: IMEX(3,3;1) time stepping wtih CFL = 0.25. Bottom Right: IMEX(5,4;1) time stepping with CFL = 0.25.}
    \label{tab: imex-convergence}
\end{table}

\noindent The results in Table \ref{tab: imex-convergence} meet our expectations. Euler IMEX is first order accurate, IMEX(2,2;1) is second order, IMEX(3,3;1) is third order, and IMEX(5,4;1) is high order. We thus consider our numerical code verified. 
\subsubsection{Boundary Effects}
The exact solution \eqref{eqn: kdv-solution} is not periodic. However, it is close to zero at the boundaries for the duration of the simulation, and is standard in the literature as a verification test. For the high order methods IMEX(3,3;1) and IMEX(5,4;1), we begin to observe a plateau in the error in the middle column of Table \ref{tab: boundary-effects-wit}. To demonstrate that this phenomenon is because of the boundary and not because of a defect in the method, we extend the domain to \(D = (-20,20)\) and run the test again with same number of degrees of freedom. Extending the domain brings the exact solution closer to zero at the boundary, and so the effects are seen at lower errors. We report the results in table \ref{tab: boundary-effects-wit}. Table \ref{tab: boundary-effects-wit} consists of four subtables. The top left shows the results using the IMEX(3,3;1) method on the domain \(D = (-10,10)\). Here we observe a plateau in the error. The top right shows the results for the IMEX(3,3;1) method, but on the domain \(D = (-20,20)\), and no plateau is observed. The bottom left table shows the results using the IMEX(5,4;1) method on the domain \(D = (-10,10)\); again, we observe a plateau in the error. The bottom right shows the results using the IMEX(5,4;1) method on the domain \(D = (-20,20)\), and we see no plateau in the error. 
    \begin{table}[h!]
        \centering
        \begin{tabular}{c|ccc|ccc}
        && \(D = (-10,10)\) &&& \(D = (-20,20)\) & \\ \hline
        &    \#Dofs  & \(\text{err}_\infty(0.5)\) & rate & \#Dofs & \(\text{err}_\infty(0.5)\) & rate \\ 
        IMEX(3,3;1) & 1025 & 3.03E-06 & -- & 1025 & 5.01E-05 & --  \\   
    & 2049 & 4.55 E-07 & 2.74 &  2049 & 6.65 E-06 & 2.92 \\
      & 4097 & 4.47 E-07 & 0.02 &  4097 & 3.27 E-07 & 4.35\\ \hline
           &    \#Dofs  & \(\text{err}_\infty(0.5)\) & rate & \#Dofs & \(\text{err}_\infty(0.5)\) & rate \\
        IMEX(5,4;1) & 1537 & 3.77 E-06 & -- & 1537 & 3.82 E-05 & -- \\ 
        & 3073 & 4.15 E-07 & 3.19 & 3073 & 4.12 E-06 & 3.22 \\
      & 6145 & 4.42 E-07 & -0.09 & 6145 & 3.82 E-07 & 3.43 \\
        \end{tabular}
        \caption{Comparing convergence rates for the KdV equation \eqref{eqn: kdv} on different domains. Left column: time stepping method. Middle column: errors and rates for domain \(D = (-10,10)\). Right column: errors and rates for domain \(D = (-20,20)\).}
        \label{tab: boundary-effects-wit}
    \end{table}
\par The results found in Table \ref{tab: boundary-effects-wit} meet our expectations of high order convergence, even in the presence of boundary effects. As such, they further serve as verification of the numerical implementation and demonstrate the high order accuracy of the method. 

\subsection{Zabusky-Kruskal}

To demonstrate that our method is capable of solving well-known problems, we apply it to several benchmark cases. The first is the Zabusky-Kruskal experiment (\cite{Zabusky_65}). This simulation uses a KdV-type equation that takes the form: 
\begin{equation}
    \dt u + u\dx u + \delta^2 \dxxx u = 0. \label{eqn: Zabusky-Kruskal-kdv-equation}
\end{equation}
\noindent The authors use \(\delta = 0.022\) with a final time of \(T = \frac{3.6}{\pi}\). The initial condition consists of a cosine wave \(u_0(x) = \cos( \pi x)\) on a domain \(D = (0,2)\). The solution consists of the cosine wave steeping due to the nonlinearity before dispersive waves form into a train of solitons. The original experiment uses an explicit leap-frog method, which is second order in time and in space and stable under a CFL condition like \(\tau = \mathcal{O}(h^3)\).

\par We solve the problem using the Euler-IMEX method with \(\mathbb{P}_1\) elements and 8,193 degrees of freedom. To illustrate the advantage of the high order technique, we also solve the problem using the IMEX(3,3;1) method with \(\mathbb{P}_2\) elements and 1,025 degrees of freedom. 

\par We compare our results to those of Zabusky-Kruskal and a recent simulation done by \cite{Chandramouli_Farhat_Musslimani_22}, which uses a time-dependent spectral normalization (TDSR) method with 256 nodes. Using the software \textit{WebPlotDigitizer} (\cite{WebPlotDigitizer}), we extract data points from the original plot and compare with our solutions at final time \(T = \frac{3.6}{\pi}\). The comparison to Zabusky-Kruskal is given in Figure \ref{fig: zk-vs-imex} and the comparison to \citep{Chandramouli_Farhat_Musslimani_22} is given in Figure \ref{fig: tdsr-vs-imex}.

\begin{figure}[H]
    \centering
    \begin{tikzpicture}
    \begin{axis}[width=0.51\textwidth, xmin =0, xmax = 2, grid=major]
\addplot[thick, purple] table {data_files/imex_data.txt};
        \addplot[thick, teal] table {data_files/zk_data.txt};
\legend{Euler-IMEX, Leap-Frog}
    \end{axis}
\end{tikzpicture}
\begin{tikzpicture}
    \begin{axis}[width=0.51\textwidth,xmin =0, xmax = 2,  grid=major]
        \addplot[thick, teal] table {data_files/zk_data.txt};
         \addplot[thick, red] table {data_files/ryujin_imex_data.txt};
        \legend{Leap-Frog, {IMEX(3,3;1)}}
    \end{axis}
\end{tikzpicture}
    \caption{Comparison of the IMEX method to the Leap-Frog method of \cite{Zabusky_65}. }
    \label{fig: zk-vs-imex}
\end{figure}
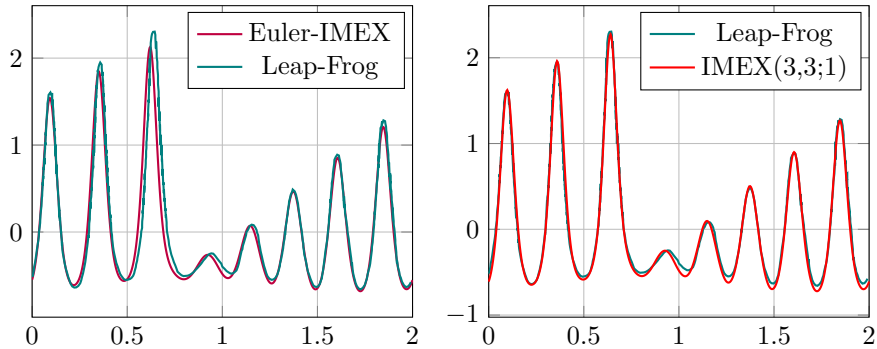
\vspace{-20pt}
\begin{figure}[H]
    \centering
    \begin{tikzpicture}
    \begin{axis}[width=0.52\textwidth, xmin =0, xmax = 2, grid=major]
\addplot[thick, purple] table {data_files/imex_data.txt};
        \addplot[thick, blue] table {data_files/tdsr_data.txt};
\legend{Euler-IMEX, TDSR}
    \end{axis}
\end{tikzpicture}
\begin{tikzpicture}
    \begin{axis}[width=0.52\textwidth, xmin =0, xmax = 2, grid=major]
        \addplot[thick, blue] table {data_files/tdsr_data.txt};
         \addplot[thick, red] table {data_files/ryujin_imex_data.txt};
        \legend{TDSR, {IMEX(3,3;1)}}
    \end{axis}
\end{tikzpicture}
    \caption{Comparison of the IMEX method to the TDSR method of \cite{Chandramouli_Farhat_Musslimani_22}. }
    \label{fig: tdsr-vs-imex}
\end{figure}
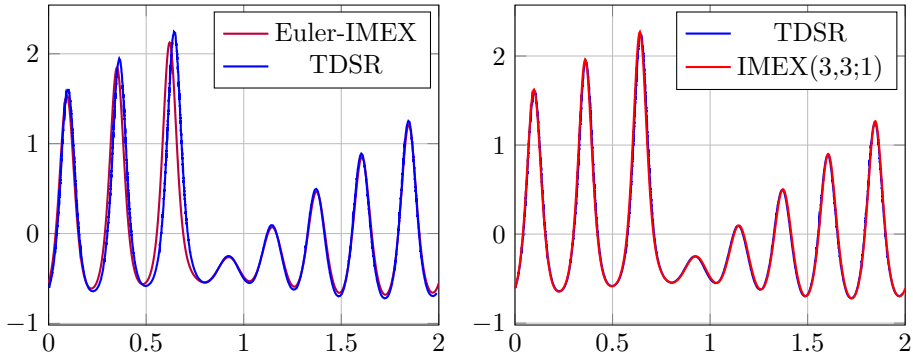

Compared to the leap-frog method, the results are close but differences remain. We believe the discrepancies arise from several factors. First, the IMEX(3,3;1) method is third order accurate, whereas the leap-frog method is only second order. Secondly,  \citep{Zabusky_65} does not provide the mesh or time-step details used to perform the simulation, so it is difficult to provide a full comparison of the discretization, but the CFL condition may have prevented the authors from using a fine mesh. 

 \par When compared to the TDSR method, the Euler-IMEX method shows differences. However, these stem from the low-order accuracy of the method. On the other hand, the IMEX(3,3;1) method shows complete agreement; the small discrepancies at the peaks of the waves are because the TDSR results were obtained using 256 nodes, a coarse mesh compared to the 1,025 degrees of freedom used in the IMEX(3,3;1) solution. 

\subsection{Single Soliton}
The second simulation is the classic single soliton solution with an exact solution \eqref{eqn: kdv-solution}. We use a domain \(D = (-10,10)\) and final time \(T = 2\) with \(\mathbb{P}_1\) finite elements with 1,025 dofs and IMEX(3,3;1) time stepping. The resulting solution matches our expectations: a soliton traveling with constant velocity \(c = 4\) while maintaining it shape and amplitude. Figure \ref{fig: single-soliton} displays the result in a space-time plot.

\begin{figure}[H]
    \centering
\includegraphics[width=0.8\linewidth]{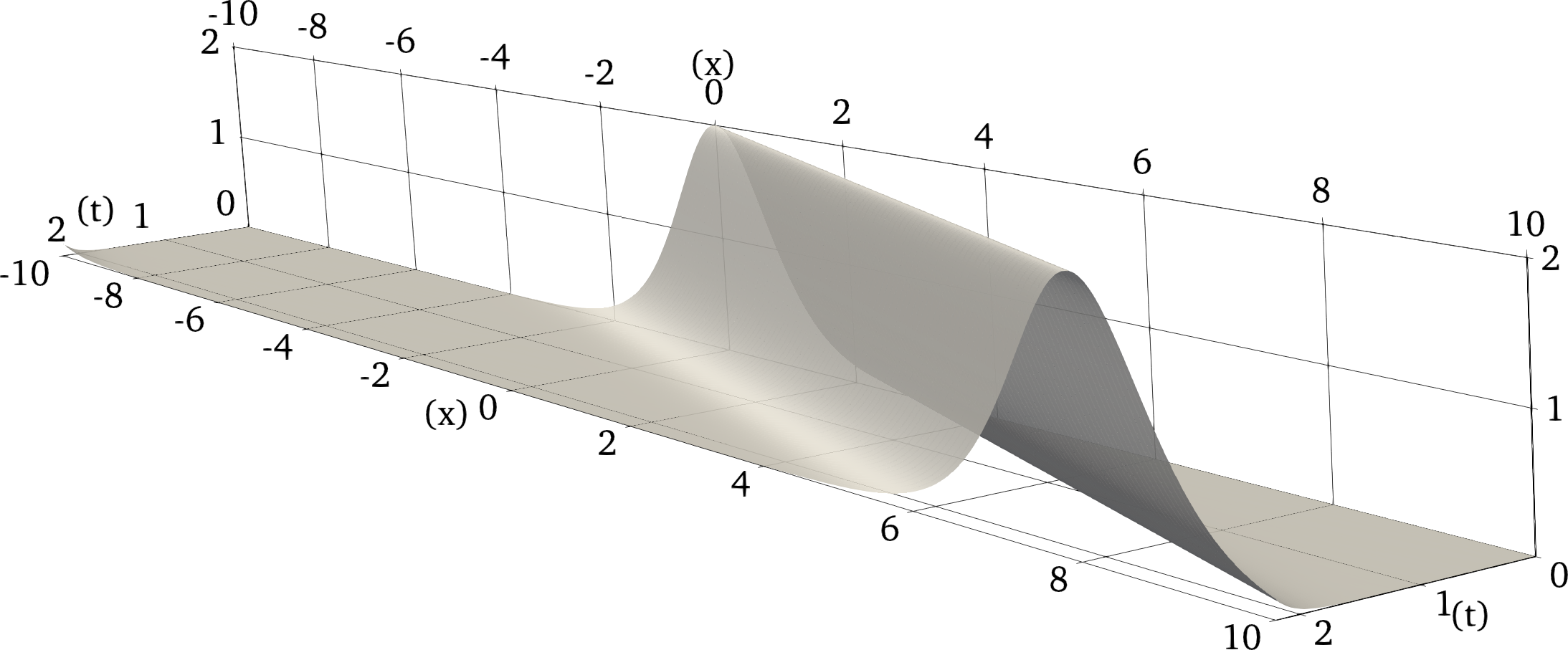}
    \caption{Space time plot for the single soliton solution \eqref{eqn: kdv-solution} of the KdV equation \eqref{eqn: kdv} with initial condition \(u_0(x) = 2\sech^2(x)\). }
    \label{fig: single-soliton}
\end{figure}

\subsection{2-Soliton Interaction}
The third simulation consists of a single wave at \(t = 0\) that separates into two separate solitons of different amplitudes, seen before in \cite{Yoneyama_84, Soomere_2009}. The problem has an exact solution given by: 
\begin{equation}
    u(t,x) = 12 \frac{3 + 4 \cosh(2x - 8t) + \cosh(4x - 64 t)}{(3\cosh(x - 28 t) + \cosh(3x - 36t))^2}. \label{eqn: kdv-2-soliton-solution}
\end{equation}
\noindent First, we present convergence tables for this problem. We solved the KdV equation with \(u_0(x) = 6\sech^2(x)\), which is \eqref{eqn: kdv-2-soliton-solution} when \(t = 0\), on \(D = (-10, 20)\). We use the IMEX(2,2;1) time stepping method with \(\mathbb{P}_1\) elements and CFL = 0.05 on five meshes. We also used the IMEX(3,3;1) time stepping method with \(\mathbb{P}_2\) elements and CFL = 0.25 on five meshes. Table \ref{tab: convergence_soliton_split} displays the results; the results meet expectations. 

\begin{table}[h!]
    \centering
\begin{tabular}{ccc|ccc}
    & IMEX(3,3;1) & & & IMEX(2,2;1) & \\ 
    \#Dofs  & \(\text{err}_{L\infty}(0.5)\) & rate & \#Dofs  & \(\text{err}_{L\infty}(0.5)\) & rate \\ \hline
    257  & 6.15E-01  & --   & 257  & 4.58E-01  & --   \\
    513  & 1.08E-01  & 2.51 & 513  & 1.13E-01  & 2.03 \\
    1025 & 8.28E-03  & 3.72 & 1025 & 2.56E-02  & 2.14 \\
    2049 & 4.95E-04  & 4.07 & 2049 & 6.21E-03  & 2.05 \\
    4097 & 1.52E-05  & 5.02 & 4097 & 1.54E-03  & 2.01 \\ \hline
\end{tabular}
    \caption{Convergence tables for the KdV equation \eqref{eqn: kdv} on \(D = (-10,20)\). Left: IMEX(3,3;1) time stepping with CFL = 0.25 and \(\mathbb{P}_2\) elements. Right: IMEX(2,2;1) time stepping with CFL = 0.05 and \(\mathbb{P}_1\) elements.}
    \label{tab: convergence_soliton_split}
\end{table}

\par Second, we plot the solution to visualize the nonlinear interaction. We use \(\mathbb{P}_1\) finite elements with 1,025 degrees of freedom and IMEX(3,3;1) time stepping on \( D = (-10,10)\). To view the complete interaction, we set the initial time to \(t_0 = -0.5\) and a final time of \(T =0.5\). The resulting solution is presented in Figure \ref{fig: two-soliton}. The left image is a space-time surface plot of the solution; the right is a space-time contour plot of the solution.

\begin{figure}[h!]
\centering\includegraphics[width=0.6\linewidth]{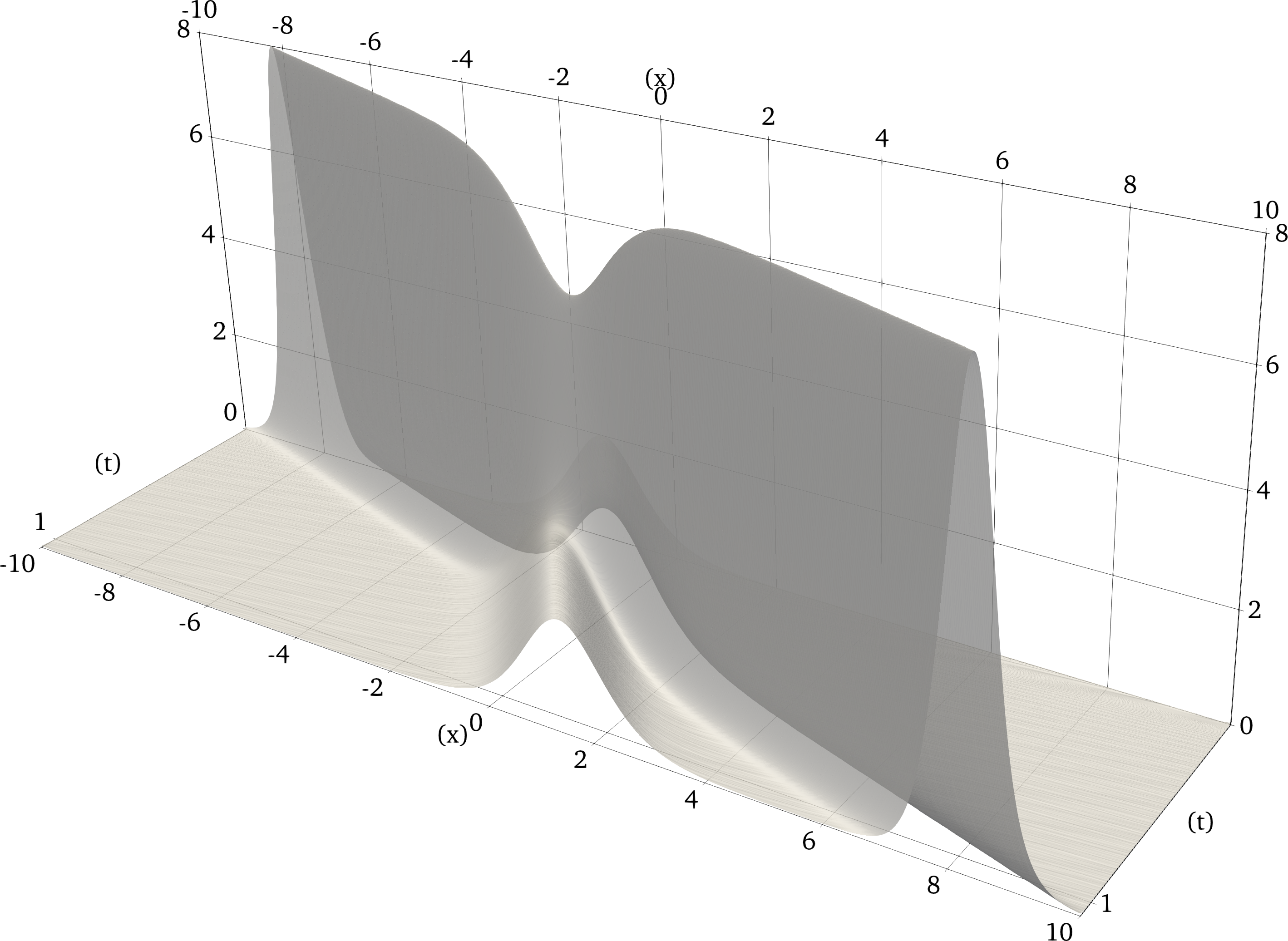}\includegraphics[width=0.25\linewidth, height = 0.6\linewidth]{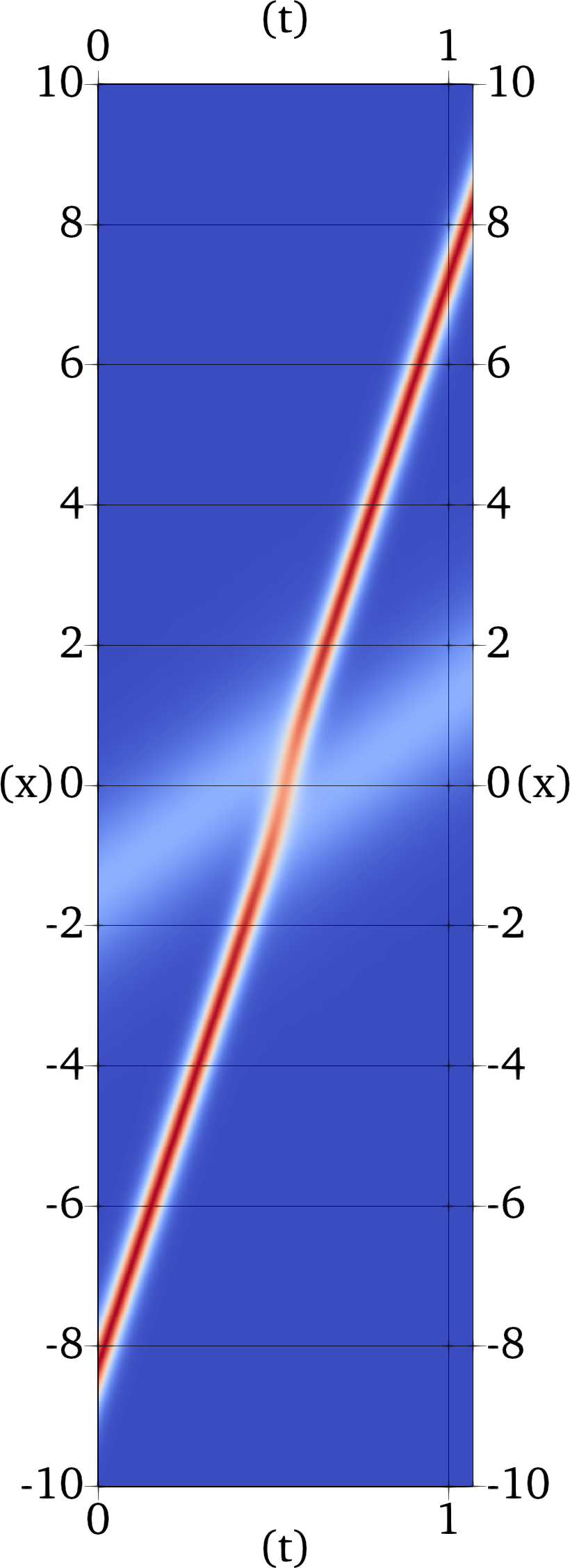}
    \caption{Space-time plots of the two-soliton interacting solution \eqref{eqn: kdv-2-soliton-solution} for the KdV equation \eqref{eqn: kdv}. Left: surface plot. Right: contour plot.}
    \label{fig: two-soliton}
\end{figure}

\par The resulting solution matches the expected behavior. The initial profile consists of a taller, and hence faster, soliton behind a smaller soliton. As time progresses, the larger soliton overtakes the smaller, merging for a moment into a single wave, before before the taller soliton emerges with its original shape, amplitude, and speed. Because of the interaction, the solitons undergo phase shifts, meaning their location is different than if they had traveled without interacting, which is visible in the contour plot of Figure \ref{fig: two-soliton} at approximately \( t= 0.5, ~ x = 0\). 

\subsection{3-Soliton Interaction}
The fourth simulation consists of an initial condition of three super-imposed solitons arranged in descending order of amplitude. The initial condition is given by 
\begin{equation}
    u_0(x) = 4 \sech^2( \sqrt{2}x) + 2 \sech^2( x - 7) + \sech^2\left( \frac{1}{\sqrt{2}} (x - 15)\right),
\end{equation}
\noindent on a domain of \(D = (-5, 45)\) and a final time of \(T = 5\). A similar simulation can be found in \cite{Anco_kdv_solitons}. We use \( \mathbb{P}_1\) finite elements with 2,049 degrees of freedom and IMEX(3,3;1) time stepping method. Figure \ref{fig: soliton-train} displays the results. The left image is a space-time plot of the solution; the right image is a contour space-time plot of the solution. 

\begin{figure}[H]
    \centering
\includegraphics[width=0.6\linewidth, height = 0.4\linewidth]{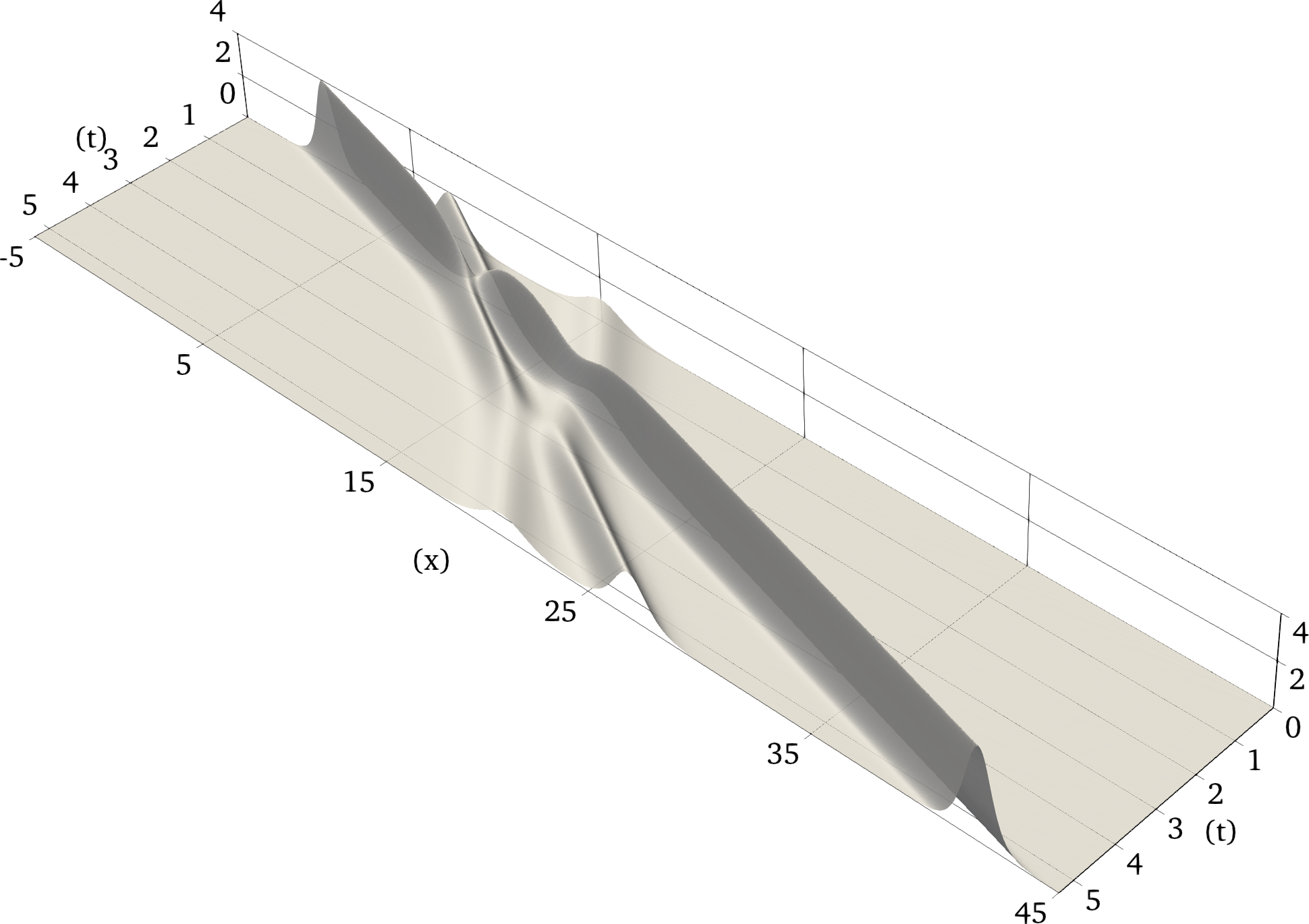}\includegraphics[width=0.25\linewidth, height = 0.5\linewidth]{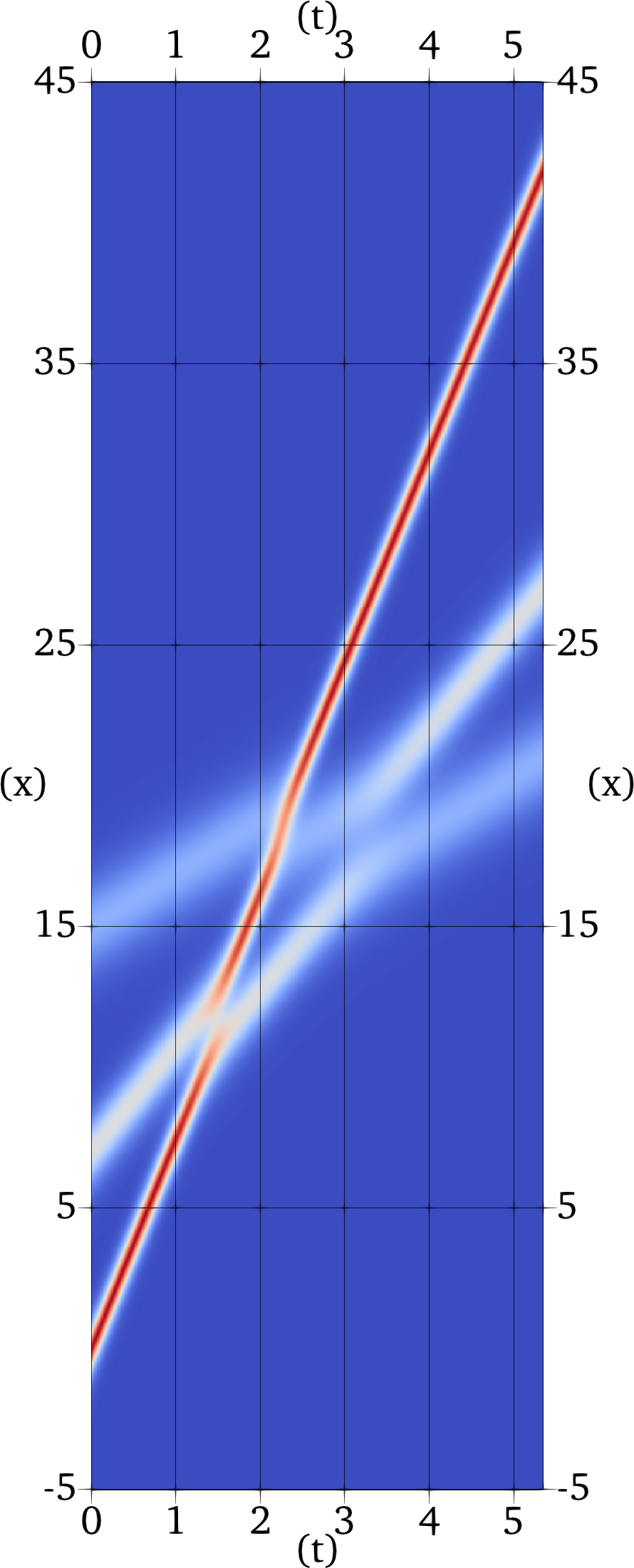}
    \caption{Space-time plots for the three-soliton interaction for the KdV equation \eqref{eqn: kdv}. Left: Surface plot. Right: contour plot.}
    \label{fig: soliton-train}
\end{figure}

We observe three interactions. The first occurs when the tallest soliton interacts with the second tallest and the two ``bump''. The three waves remain visible, but the middle soliton grows in amplitude and thereby begins to travel faster while the left soliton loses amplitude and slows. This interaction can be seen at approximately \(t = 1.5,~ x = 11\) in the contour plot. The second interaction occurs when the now tallest middle soliton interacts with the smallest. The two briefly merge into a single wave before the larger passes through, maintaining its shape and speed but undergoing a phase shift, similar to the 2-soliton test. This can be seen at at approximately \(t = 2.5, ~ x = 18\) in the contour plot. The third interaction occurs when the now second tallest soliton reaches the smallest, and a ``bump'' interaction can be observed. This can be seen at approximately \(t = 3.5,~ x = 20\) in the contour plot.

\section{Conclusion}
An IMEX scheme using \(H^1\)-conforming finite elements has been introduced for generalized KdV equations. The method is provably stable up to a CFL condition \(\tau = \mathcal{O}(h)\) and mass-conservative. High-order techniques have been introduced and analyzed. The numerical simulations demonstrate both the low and high order capabilities and robustness of the method. The next extension of this work is to apply the approach to dispersive systems, such as the Serre-Green-Naghdi shallow water equations (\cite{Guermond_Kees_Popov_Tovar22_1}). Solving higher-order models, such as the Kawahara equation \cite{Biswas_09} is another direction for future research. Another avenue would be to modified the method to account for Boussinesq type models, which often include terms like \(\dt \dxx u\); see \cite{Benjamine_Bona_Mahony_97}. 

\section*{Acknowledgments}
The author would like to thank Jean-Luc Guermond for guidance and advice in service of this work, as well as Matthias Maier for assistance in implementing the method into \verb|ryujin|. Also, the author would like to thank Eric J. Tovar for helpful discussions and insights and Jordan Hoffart for helpful commentary on this manuscript.



\end{document}